\begin{document}
	
\thispagestyle{empty}
	
\newtheorem{Lemma}{\bf LEMMA}[section]
\newtheorem{Theorem}[Lemma]{\bf THEOREM}
\newtheorem{Claim}[Lemma]{\bf CLAIM}
\newtheorem{Corollary}[Lemma]{\bf COROLLARY}
\newtheorem{Proposition}[Lemma]{\bf PROPOSITION}
\newtheorem{Example}[Lemma]{\bf EXAMPLE}
\newtheorem{Fact}[Lemma]{\bf FACT}
\newtheorem{definition}[Lemma]{\bf DEFINITION}
\newtheorem{Notation}[Lemma]{\bf NOTATION}
\newtheorem{remark}[Lemma]{\bf REMARK}

\title{Symmetric implication zroupoids and identities of Bol-Moufang type}

\author{Juan M. Cornejo and
Hanamantagouda P. Sankappanavar}



\date{}

\maketitle

{\color{red} This paper has been pubished by Soft Comput onlline on 25 October 2017.
DOI 10.1007/s00500-017-2869-z}

\begin{abstract}
	 An algebra $\mathbf A = \langle A, \to, 0 \rangle$, where $\to$ is binary and $0$ is a constant, is called an {\it implication zroupoid} ($\mathcal I$-zroupoid, for short) if $\mathbf A$ satisfies the identities: (I):  $(x \to y) \to z \approx ((z' \to x) \to (y \to z)')'$,  and  (I$_{0}$):  $ 0'' \approx 0$, where $x' : = x \to 0$.  An implication zroupoid is {\it symmetric} if it satisfies the identities: $x'' \approx x$ and $(x \to y')' \approx (y \to x')'$.  An identity is of {\it Bol-Moufang type} if it contains only one binary operation symbol, one of its three variables occurs twice on each side, each of the other two variables occurs once on each side, and the variables occur in the same (alphabetical) order on both sides of the identity. 
	 
	  In this paper we make a systematic analysis of all $ 60$  identities of Bol-Moufang type in the variety $\mathcal S$ of  symmetric $\mathcal I$-zroupoids.  
We show that $47$ of the subvarieties of $\mathcal S$, defined by the identities of Bol-Moufang type are equal to the variety $\mathcal{SL}$ of 
$\lor$-semilattices with the least element $0$ and, one of them is equal to $\mathcal S$.  Of the remaining 12, there are only $3$ distinct ones.  We also give an explicit description of the poset of the (distinct) subvarieties of $\mathcal S$ of Bol-Moufang type. 
\end{abstract}

\thispagestyle{empty}

\section{Introduction}

In 1934, Bernstein gave a system of axioms for Boolean algebras in \cite{bernstein1934postulates} using implication alone.  
Even though his system was not equational, it is not hard to see that one could easily convert it into an equational one by using an additional constant.  
In 2012, the second author extended this ``modified Bernstein's theorem''
to De Morgan algebras in \cite{sankappanavarMorgan2012}.  
Indeed, he shows in \cite{sankappanavarMorgan2012} that the varieties of De Morgan algebras, Kleene algebras, and  Boolean algebras are term-equivalent, respectively, to the varieties, $\mathcal{DM}$, $\mathcal{KL}$, and $\mathcal{BA}$ (defined below)
whose defining axioms use only an implication $\to$ and a constant $0$.    
The primary role played by the identity (I): $(x \to y) \to z \approx [(z' \to x) \to (y \to z)']'$, where $x' : = x \to 0$, in the axiomatization of each of those new varieties motivated the second author to introduce a new equational class of algebras called ``Implication zroupoids'' in   
\cite{sankappanavarMorgan2012}.  It also turns out that this new variety contains the variety $\mathcal{SL}$ (defined below) which is shown (see \cite{cornejo2015implication}) to be term-equivalent to the variety of $\lor$-semilattices with the least element $0$.

\begin{definition}
	An algebra $\mathbf A = \langle A, \to, 0 \rangle$, where $\to$ is binary and $0$ is a constant, is called a {\rm zroupoid}.
	A zroupoid $\mathbf A = \langle A, \to, 0 \rangle$ is an {\rm Implication zroupoid} \rm ($\mathcal I$-zroupoid, for short\rm) if $\mathbf A$ satisfies:
	\begin{itemize}
		\item[\rm{(I)}] 	$(x \to y) \to z \approx ((z' \to x) \to (y \to z)')'$, where $x' : = x \to 0$,
		\item[{\rm (I$_{0}$)}]  $ 0'' \approx 0$.
	\end{itemize}
$\mathcal{I}$ denotes the variety of implication zroupoids.
The varieties $\mathcal{DM}$, 
$\mathcal I_{1,0}$, $\mathcal C$ and $\mathcal{SL}$ 
are defined relative to $\mathcal{I}$, respectively, by the following identities: 
$$
\begin{array}{ll}
{\rm (DM)} & (x \to y) \to x \approx x \mbox{ (De Morgan Algebras)}; \\
{\rm (I_{1,0})} & x' \approx x; \\
{\rm (C)} & x \to y \approx y \to x; \\
{\rm (SL)}  & x' \approx x \mbox{ and } x \to y \approx y \to x.
\end{array}
$$
The varieties 
$\mathcal{KL}$ and $\mathcal{BA}$ 
are defined relative to $\mathcal{DM}$, respectively, by the following identities: 
$$
\begin{array}{ll}
{\rm (KL)} & (x \to x) \to (y \to y) \approx y \to y  \mbox{ (Kleene algebras)} ; \\
{\rm (BA)} & x \to x \approx 0'  \mbox{ (Boolean algebras)}.\\
\end{array}
$$ 
\end{definition}

As proved in \cite{sankappanavarMorgan2012}, the variety $\mathcal{I}$  
generalizes the variety of De Morgan algebras and exhibits several interesting properties; for example, 
the identity $x''' \to y \approx x' \to y$ holds in $\mathcal{I}$.  
Several new subvarieties of $\mathcal{I}$ are also introduced and investigated in
\cite{sankappanavarMorgan2012}.  The (still largely unexplored) lattice of subvarieties of $\mathcal{I}$ seems to be fairly complex.  In fact, Problem 6 of \cite{sankappanavarMorgan2012} calls for an investigation of the structure of the lattice of subvarieties of $\mathcal{I}$.

The papers  \cite{cornejo2015implication}, \cite{cornejo2016order} and \cite{cornejo2016semisimple} have addressed further, but still partially, the above-mentioned problem by introducing new subvarieties of $\mathcal{I}$ and investigating relationships among them.  The (currently known) size of the poset of subvarieties of $\mathcal{I}$ is at least 24; but it is still unknown whether the lattice of subvarieties is finite or infinite. Two of the subvarieties of $\mathcal{I}$ are:
 $\mathcal{I}_{2,0}$ and $\mathcal{MC}$ which are defined relative to $\mathcal{I}$, respectively, by the following identities, where $x \land y := (x \to y')' $ :\\      
\begin{equation} \label{eq_I20}  \tag{{\rm I}$_{2,0}$}
x'' \approx x;
\end{equation}
\begin{equation} \label{eq_MC} \tag{MC}
x \wedge y \approx y \wedge x.
\end{equation} 

For a somewhat more detailed summary of the results contained in the above-mentioned papers, we refer the reader to the introduction of \cite{cornejo2016derived}.

\begin{definition}
	Let $\mathbf{A} \in \mathcal{I}$.  $\mathbf{A}$ is {\rm involutive} if $A \in \mathcal{I}_{2,0}$.  $\mathbf{A}$ is {\rm meet-commutative} if $A \in \mathcal{MC}$.  $\mathbf{A}$ is {\rm symmetric} if $\mathbf{A}$ is both involutive and meet-commutative.  Let $\mathcal{S}$ denote the variety of symmetric $\mathcal{I}$-zroupoids.  In other words, $\mathcal{S} = \mathcal{I}_{2,0} \cap \mathcal{MC}$.
\end{definition}

In the present paper we are interested in the subvarieties of $\mathcal{S}$ defined by certain {\it weak associative laws}, called ``Bol-Moufang'' laws. 
A precise definition of a {\it weak associative law} appears in \cite{Ku96}, which is essentially restated below.

\begin{definition}
	Let $n \in \mathbb{N}$ and let $\mathcal{L} := \langle \times \rangle$, where $\times$ is a binary operation symbol.  A (groupoid) term in $\mathcal{L}$ 
is of length $n$ if the number of occurrences of variables (not necessarily distinct) is $n$.  A weak associative law of length $n$ in $\mathcal{L}$ is an identity of the form $p \approx q$, where $p$ and $q$ are of length $n$ and contain the same variables that occur in the same order in both $p$ and $q$ \rm(only the bracketing is possibly different\rm).     
\end{definition}
We note that the identities of {\it Bol-Moufang type} investigated in \cite{Fe69} are weak associative laws of length 4 in three distinct variables, with one of them repeated (see below for a more precise definition).

The following (general) problem presents itself naturally.\\

\noindent PROBLEM:  Let $\mathcal V$ be a given variety of algebras (whose language includes a  binary operation symbol, say, $\times$).  
Investigate the subvarieties of $\mathcal V$ defined by   
weak associative laws {\rm (}with respect to $\times${\rm)} 
and their mutual relationships.\\ 

Special cases of the above problem have already 
been considered in the literature, wherein the weak associative laws are the identities of Bol-Moufang type, and the variety ${\mathcal V}$ is the variety of quasigroups or the variety of loops
(see \cite{Fe69}, \cite{Ku96}, \cite{PV05a}, \cite{PV05b}).  

\cite{Fe69} has noted that there are $60$ weak associative laws of length 4 in three distinct variables (with one variable repeated).  Since the Moufang laws and the Bol identities were among those $60$ laws, the author of that paper called  such identities as those of Bol-Moufang type.  For more information about these identities in the context of quasigroups and loops, see \cite{Ku96, PV05a, PV05b}.

In this paper and in \cite{cornejo2016weak associative}, we carry out a complete analysis of weak associative identities of lenth $\leq 4$, in the context of the variety ${\mathcal S}$ of symmetric implication zroupoids.     
We focus, in this paper, exclusively on the systematic study of the identities of Bol-Moufang type in the context of symmetric implication zroupoids, while \cite{cornejo2016weak associative} will focus on the rest of the weak associative identities.

Without loss of generality, we will assume that the variables in $p$ and $q$ occur  
alphabetically.  
So, we can say (more explicitly) that
an identity $p \approx q$, in the language $\langle \to \rangle$, is of \emph{Bol-Moufang type} if:

\begin{enumerate}
	\item[(i)] 
	$p$ and $q$ contain the same three variables (chosen alphabetically),  
	\item[(ii)]  the length of $p$ = $4$ = the length of $q$, 
	\item[(iii)] the order in which the variables appear in $p$ is exactly the same as the order in which they appear in $q$.
\end{enumerate}

Let $B$ denote the set of identities of Bol-Moufang type.

The systematic notation for the identities in $B$, presented below, was developed in \cite{PV05a}.

Let $x, y, z$ be all the variables appearing in the identities of $B$.  Without loss
of generality, we can assume that they appear in the terms in the alphabetical order.
Then, the 6 ways in which the 3 variables can form a word of length 4 are shown 
below:\\
\begin{align*}
&\rm A:  \quad xxyz;   \qquad   \quad  &\rm B: \quad  xyxz; \\
&\rm C: \quad xyyz;   \qquad   \quad   &\rm D: \quad xyzx; \\
&\rm E:  \quad xyzy;  \qquad  \quad   &\rm F:  \quad xyzz.\\
\end{align*}
It is clear that there are exactly 5 ways in which a word of size 4 can be bracketed, as given below:\\
1:  \quad $o \to (o \to (o \to o))$; \\
2:  \quad $o \to ((o \to o) \to o)$; \\
3:  \quad $(o \to o) \to (o \to o)$;\\
4:  \quad $(o \to (o \to o)) \to o$; \\
5:  \quad $((o \to o) \to o) \to o$.\\

Let ($X_{ij}$) with $X \in \{\rm A, \dots, \rm F\}$, $1 \leq i < j \leq 5$, denote the identity from $B$ 
whose left-hand side is bracketed according to $i$, and
whose right-hand side is bracketed according to $j$.  For instance, ($A_{12}$) is the identity
$x \to (x \to (y \to z))  \approx x \to ((x \to y) \to z)$.  We note that B$_{15}$ is a Moufang identity, while E$_{25}$ is a Bol identity.

It is noted in \cite{Fe69}  that there are ($6 \times (4 + 3 + 2 + 1) =) \ 60$ nontrivial identities in B.

We will denote by $\mathcal{X}_{ij}$ the subvariety of $\mathcal S$ defined
by the identity ($X_{ij}$).

We will show in Section 3 that $47$ of these $60$ varieties coincide with $\mathcal{SL}$ (and hence with each other).  In Section 4, we prove our main result that there are 4 nontrivial varieties of Bol-Moufang type that are distinct from each other.  Furthermore, we describe explicitly the poset formed by them, together with the variety $\mathcal{BA}$ (which is contained in some of them). 

We would like to acknowledge that the software ``Prover 9/Mace 4'' developed by McCune \cite{Mc} has been useful to us in some of our findings presented in this paper.  We have used it to find examples and to check some conjectures.

\section{Preliminaries}

We refer the reader to the standard references
\cite {balbesDistributive1974}, \cite{burrisCourse1981} and \cite{Ra74} for concepts and results used, but not explained, in this paper.

Recall from \cite{sankappanavarMorgan2012} that $\mathcal{SL}$ is the variety of semilattices with a least element $0$. It was shown in \cite{cornejo2015implication} that $\mathcal{SL}   
= \mathcal{C} \cap \mathcal{I}_{1,0} $.

The two-element algebras $\mathbf{2_s}$, $\mathbf{2_b}$ were introduced in \cite{sankappanavarMorgan2012}.  Their  
operations $\to$ are respectively as follows:  \\

\begin{minipage}{0.3 \textwidth}
	\begin{tabular}{r|rr}
		$\to$: & 0 & 1\\
		\hline
		0 & 0 & 1 \\
		1 & 1 & 1
	\end{tabular} 
	
\end{minipage}
\begin{minipage}{0.3 \textwidth}
	\begin{tabular}{r|rr}
		$\to$: & 0 & 1\\
		\hline
		0 & 1 & 1 \\
		1 & 0 & 1
	\end{tabular}   
\end{minipage}           
\ \\ \ \\
Notice that $\mathcal{V}(\mathbf{2_b}) = \mathcal{BA}$.
Recall also from \cite[Corollary 11.4]{cornejo2015implication} \label{CorSL} that
$\mathcal{V}(\mathbf{2_s}) = \mathcal{SL}$.  The following lemma easily follows from the definition of $\land$ given earlier in the Introduction.

\begin{Lemma} \label{lemma_SL_I10_C}
	$\mathcal{MC} \cap \mathcal{I}_{1,0} \subseteq  \mathcal{C} \cap \mathcal{I}_{1,0} = \mathcal{SL}$.  
\end{Lemma}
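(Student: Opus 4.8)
The plan is to establish the two assertions separately: first the inclusion $\mathcal{MC} \cap \mathcal{I}_{1,0} \subseteq \mathcal{C} \cap \mathcal{I}_{1,0}$, and then the already-cited equality $\mathcal{C} \cap \mathcal{I}_{1,0} = \mathcal{SL}$. The second part requires no new work, since it is recorded in the excerpt as a result from \cite{cornejo2015implication}; I would simply invoke it. So the real content is the first inclusion, and the key observation is the definition $x \land y := (x \to y')'$ together with the hypothesis $x' \approx x$ that holds in $\mathcal{I}_{1,0}$.

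First I would take an arbitrary $\mathbf{A} \in \mathcal{MC} \cap \mathcal{I}_{1,0}$. Since $\mathbf{A} \in \mathcal{I}_{1,0}$, the identity $x' \approx x$ holds, i.e. $x \to 0 \approx x$ for all $x \in A$. Substituting this into the definition of $\land$, we get $x \land y = (x \to y')' = (x \to y)'= x \to y$, where the middle equality uses $y' \approx y$ and the last uses $(x\to y)' \approx x \to y$ (again $z' \approx z$ with $z := x \to y$). Hence in $\mathbf{A}$ the derived operation $\land$ coincides with $\to$ itself. Now since $\mathbf{A} \in \mathcal{MC}$, the identity (MC): $x \land y \approx y \land x$ holds, which therefore becomes $x \to y \approx y \to x$; that is, $\mathbf{A} \in \mathcal{C}$. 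Combined with $\mathbf{A} \in \mathcal{I}_{1,0}$, this gives $\mathbf{A} \in \mathcal{C} \cap \mathcal{I}_{1,0}$, establishing the inclusion.

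Finally, chaining with the cited equality $\mathcal{C} \cap \mathcal{I}_{1,0} = \mathcal{SL}$ yields the full statement $\mathcal{MC} \cap \mathcal{I}_{1,0} \subseteq \mathcal{C} \cap \mathcal{I}_{1,0} = \mathcal{SL}$. I do not anticipate any genuine obstacle here: the only thing to be slightly careful about is making sure each rewriting step in the computation $x \land y = x \to y$ is justified purely by the identity $z' \approx z$ (valid in $\mathcal{I}_{1,0}$) applied to the appropriate subterm, and that no implicit use of associativity or commutativity creeps in before it has been derived. This is why the lemma is flagged as following "easily" from the definition of $\land$.
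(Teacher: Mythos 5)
Your proposal is correct and matches the paper's intended argument: the paper omits the proof, remarking only that the lemma ``easily follows from the definition of $\land$,'' and the computation you supply --- using $x' \approx x$ to reduce $x \land y = (x\to y')'$ to $x \to y$, so that (MC) collapses to (C) --- is exactly that argument, with the equality $\mathcal{C} \cap \mathcal{I}_{1,0} = \mathcal{SL}$ correctly taken from the cited reference.
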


\begin{Lemma} {\bf \cite[Theorem 8.15]{sankappanavarMorgan2012}} \label{general_properties_equiv}
	Let $\mathbf A$ be an  $\mathcal{I}$-zroupoid. Then the following are equivalent:
	\begin{enumerate}[{\rm (a)}]
		\item $0' \to x \approx x$, \label{TXX} 
		\item $x'' \approx x$,
		\item $(x \to x')' \approx x$, \label{reflexivity}
		\item $x' \to x \approx x$. \label{LeftImplicationwithtilde}
	\end{enumerate}
\end{Lemma}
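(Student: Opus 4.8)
The plan is to prove the cycle of implications (a) $\Rightarrow$ (b) $\Rightarrow$ (c) $\Rightarrow$ (d) $\Rightarrow$ (a), using only identity (I), the identity (I$_0$): $0'' \approx 0$, and the basic consequence $x''' \to y \approx x' \to y$ of $\mathcal I$ noted in the introduction. First I would set up a stock of derived identities that hold in every $\mathcal I$-zroupoid by specializing (I): e.g. substituting $z := 0$ in (I) and using $0' := 0 \to 0$, and substituting $x := 0$ or $y := 0$ to get manageable one- and two-variable consequences. In particular I expect to need an identity expressing $x' \to y$ (or $(x \to y)'$, i.e. $0' \to (x\to y)$ read suitably) in a normal form, and the already-available fact $x''' \to y \approx x' \to y$; a companion fact I would try to extract early is something like $0' \to x'' \approx x'$ or $(x \to x')' \approx$ (some fixed expression), since these bridge the four stated forms.

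For (a) $\Rightarrow$ (b): assuming $0' \to x \approx x$, apply it with $x$ replaced by $x''$ to get $0' \to x'' \approx x''$; then the goal is to show $0' \to x'' \approx x$ independently (a consequence of (I) plus (I$_0$) valid in all of $\mathcal I$), which combined gives $x'' \approx x$. For (b) $\Rightarrow$ (c): with $x'' \approx x$ available, rewrite $(x \to x')'$ using (I) — take (I) with $y := 0$, $z := 0$, or arrange $(x \to x') \to 0$ and push the prime through — and simplify using $x'' = x$ repeatedly until it collapses to $x$. For (c) $\Rightarrow$ (d): from $(x \to x')' \approx x$, substitute and use (I) to relate $x' \to x$ to $(x \to x')'$; the natural move is to instantiate (I) so that the right-hand side $((z' \to x) \to (y \to z)')'$ becomes $(x' \to x)'$-shaped or $(x \to x')'$-shaped, then take primes of both sides and invoke (c). For (d) $\Rightarrow$ (a): from $x' \to x \approx x$, put $x := 0'$ to get $0'' \to 0' \approx 0'$, use (I$_0$) to turn $0''$ into $0$, giving $0 \to 0' \approx 0'$; then a separate $\mathcal I$-identity of the form $0' \to x \approx (0 \to x') \to \cdots$ or direct manipulation of (I) with suitable substitutions should let me promote this to the full statement $0' \to x \approx x$.

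The main obstacle I anticipate is purely computational bookkeeping: identity (I) is a single heavily-nested equation, so each implication requires a carefully chosen substitution instance of (I) followed by several rewriting steps, and it is easy to get lost tracking which primes have been resolved. The real content is choosing the right substitutions in (I) at each stage so that one side collapses to a term of the form appearing in the hypothesis; I would first derive, once and for all, a short list of ``reduction rules'' valid in $\mathcal I$ (such as $x''' \to y \approx x' \to y$ and whatever $z := 0$ in (I) yields), and then each of the four implications should reduce to two or three applications of these rules. If the cycle (a)--(b)--(c)--(d)--(a) proves awkward at one link, a fallback is to prove a different ordering of the implications or to route some implications through the neutral-element identity $0 \to 0' \approx 0'$ as a common hub.
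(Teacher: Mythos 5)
First, note that the paper does not prove this lemma at all: it is quoted as Theorem 8.15 of Sankappanavar (2012), so your argument has to stand as a self-contained derivation from (I) and (I$_0$). As written it does not: every link of your cycle is described at the level of ``the natural move is \dots'' and ``should let me promote \dots'' without the actual substitution instances of (I) and the rewriting chains, and in the cited source this equivalence is a full theorem resting on a battery of previously derived identities, so the content of a proof is precisely the bookkeeping you defer.

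More seriously, the one concrete claim your plan leans on is false. For (a) $\Rightarrow$ (b) you propose to show that $0' \to x'' \approx x$ holds in \emph{every} $\mathcal I$-zroupoid and then combine it with $0' \to x'' \approx x''$. But the two-element zroupoid with $a \to b = 0$ for all $a,b$ satisfies (I) and (I$_0$) (both sides of (I) evaluate to $0$, and $0'' = 0$), while in it $0' \to x'' = 0 \neq x$ for $x \neq 0$; so $0' \to x'' \approx x$ is not a consequence of (I) and (I$_0$). What one can extract cheaply from (I) (take $y:=0$, $z:=0$) is only $x'' \approx (0' \to x)''$, which under hypothesis (a) gives no information, so the bridge from $0' \to x \approx x$ to $x'' \approx x$ has to be built by a genuinely different computation carried out under the hypothesis, not invoked as an ambient identity of $\mathcal I$. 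Similar gaps remain at the other links (e.g.\ in (d) $\Rightarrow$ (a) the ``separate $\mathcal I$-identity'' that is supposed to promote $0 \to 0' \approx 0'$ to $0' \to x \approx x$ is never identified). Until these derivations are actually produced, the proposal is a plausible outline but not a proof.
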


\begin{Lemma}{\bf \cite{sankappanavarMorgan2012}} \label{general_properties}
	Let $\mathbf A \in  \mathcal{I}_{2,0}$. Then  
	\begin{enumerate}[{\rm (a)}]
		\item $x' \to 0' \approx 0 \to x$, \label{cuasiConmutativeOfImplic2}
		\item $0 \to x' \approx x \to 0'$. \label{cuasiConmutativeOfImplic}
	\end{enumerate}
\end{Lemma}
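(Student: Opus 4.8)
The plan is to derive (a) and (b) directly from (I), $(\mathrm{I}_0)$ and the involution law $x'' \approx x$, using as the only external ingredient the equivalence recorded in Lemma~\ref{general_properties_equiv}. Since every $\mathbf A \in \mathcal{I}_{2,0}$ satisfies $x'' \approx x$, that lemma hands us the reduction rule $0' \to x \approx x$, which I will apply repeatedly to erase a leading $0'$.

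The first step is an auxiliary computation: $0 \to 0' \approx 0'$. Instantiating (I) at $x := 0$ and using $z' \to 0 = z'' = z$ collapses the axiom to $(0 \to y) \to z \approx (z \to (y \to z)')'$. Taking $z := 0'$ and erasing the leading $0'$ on the right via Lemma~\ref{general_properties_equiv} gives $(0 \to y) \to 0' \approx y \to 0'$, and then $y := 0$, together with $0 \to 0 = 0'$ and $0' \to 0' \approx 0'$, yields $0 \to 0' \approx 0'$. In particular $(0 \to 0')' = 0'' = 0$ by $(\mathrm{I}_0)$.

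Now for (a), I would instantiate (I) at $y := 0$, getting $x' \to z \approx ((z' \to x) \to (0 \to z)')'$, and then set $z := 0'$. On the right, $0'' \to x = 0 \to x$ by $(\mathrm{I}_0)$, and $(0 \to 0')' = 0$ by the previous step, so the right-hand side becomes $((0 \to x) \to 0)' = (0 \to x)'' = 0 \to x$; this is exactly $x' \to 0' \approx 0 \to x$. Finally (b) follows from (a) by the substitution $x \mapsto x'$ and one more use of $x'' \approx x$: $x \to 0' = x'' \to 0' \approx 0 \to x'$.

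I do not anticipate a genuine difficulty. The only point that requires a moment's thought is that the apparently harmless term $0 \to 0'$ has to be pinned down to $0'$ before the substitution $z := 0'$ in (I) produces anything useful, and that the repeated appeal to $0' \to x \approx x$ is legitimate precisely because we are working inside $\mathcal{I}_{2,0}$ rather than in $\mathcal{I}$.
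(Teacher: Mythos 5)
Your proof is correct; every step checks out. The paper itself offers no argument for this lemma --- it is simply quoted from \cite{sankappanavarMorgan2012} --- so there is no in-paper proof to compare against. Your derivation is a legitimate self-contained replacement: the instantiation $x:=0$ in (I) together with $z'\to 0 = z''\approx z$ does collapse the axiom to $(0\to y)\to z \approx (z\to(y\to z)')'$; the specializations $z:=0'$ and $y:=0$, combined with $0'\to x\approx x$ (which Lemma~\ref{general_properties_equiv} guarantees in $\mathcal{I}_{2,0}$), correctly pin down $0\to 0'\approx 0'$ and hence $(0\to 0')'\approx 0$; and the instantiation $y:=0$, $z:=0'$ in (I) then yields $x'\to 0'\approx ((0\to x)\to 0)'=(0\to x)''\approx 0\to x$, which is (a). Part (b) is indeed just (a) at $x\mapsto x'$ followed by involution. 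Your closing observation is also the right one to flag: the whole computation leans on $0'\to x\approx x$, which is available only because $x''\approx x$ holds in $\mathcal{I}_{2,0}$, not in all of $\mathcal{I}$.
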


\begin{Lemma} \label{general_properties3}  \label{general_properties2}
	Let $\mathbf A \in \mathcal{I}_{2,0}$. Then $\mathbf A$ satisfies:
	\begin{enumerate}[{\rm (a)}]
		\item $(x \to 0') \to y \approx (x \to y') \to y$, \label{281014_05}
		\item $x \to (0 \to x)' \approx x'$, \label{291014_02}
		\item $(y \to x) \to y \approx (0 \to x) \to y$, \label{291014_10}
		\item $(0 \to x) \to (0 \to y) \approx x \to (0 \to y)$, \label{311014_06}
		\item $x \to y \approx x \to (x \to y)$, \label{031114_04} 
		\item $0 \to (0 \to x)' \approx 0 \to x'$, \label{031114_07}
		\item $0 \to (x' \to y)' \approx x \to (0 \to y')$, \label{071114_01}
		\item $0 \to (x \to y) \approx x \to (0 \to y)$, \label{071114_04}
		\item $0 \to (x \to y')' \approx 0 \to (x' \to y)$, \label{191114_05}
		\item $x \to (y \to x') \approx y \to x'$, \label{281114_01}		
		\item $(x \to y')' \to z \approx x \to (y \to z)$. \label{140715_20} 
	\end{enumerate}
\end{Lemma}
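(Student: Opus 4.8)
The plan is to verify all eleven identities by equational reasoning inside $\mathcal{I}_{2,0}$, the only real ``engine'' being the defining identity~(I), used in combination with the characterizations of involutivity in Lemma~\ref{general_properties_equiv} and the identities of Lemma~\ref{general_properties}. Since we are in $\mathcal{I}_{2,0}$ I would freely replace $t''$ by $t$ throughout; in particular~(I) may be used in the equivalent ``negated'' form $((x\to y)\to z)' \approx (z'\to x)\to(y\to z)'$, and the identities $0'\to x\approx x$, $x'\to x\approx x$, $(x\to x')'\approx x$ (Lemma~\ref{general_properties_equiv}), together with their quick consequence $x\to x'\approx x'$ (negate $(x\to x')'\approx x$ and use $t''\approx t$), are all available. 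Two specializations of~(I) carry most of the load: setting $x:=0$ gives $(0\to y)\to z \approx \big(z\to(y\to z)'\big)'$, and setting $y:=0$ gives $x'\to z \approx \big((z'\to x)\to(0\to z)'\big)'$. What remains, in each case, is to choose the substitution for the outer variable and to decide when to ``pad'' a term $t$ as $0'\to t$ or $t'\to t$ so that a left argument becomes an application to which~(I) applies.

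I would then run through (a)--(k) in the stated order, allowing each item to quote its predecessors. Several are one- or two-line chases. For~(a): apply~(I) to $(x\to y')\to y$ and to $(x\to 0')\to y$ (matching $y$ with the last variable of~(I) in both cases) and simplify $y'\to y$ and $0'\to y$ to $y$; both sides collapse to $\big((y'\to x)\to y'\big)'$. For~(c): apply~(I) to $(y\to x)\to y$ and to $(0\to x)\to y$ and simplify $y'\to y$ and $y'\to 0=y''=y$; both sides collapse to $\big(y\to(x\to y)'\big)'$. For~(b): take the $y:=0$ specialization above, put $z:=x$, use $x'\to x\approx x$, then apply $'$ and $t''\approx t$. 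Items (d)--(j) are of the same flavour: expand one or both sides by~(I), possibly after padding, and reconcile using Lemmas~\ref{general_properties_equiv},~\ref{general_properties} and the already-proved parts of the present lemma; taken together, (f)--(j) say in effect that $0\to(\,\cdot\,)$ commutes past $\to$ and swallows inner negations in the expected way, and each is a short computation.

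The step I expect to be the main obstacle is~(k), $(x\to y')'\to z\approx x\to(y\to z)$ --- that is, $(x\land y)\to z\approx x\to(y\to z)$ for the derived meet. Unlike the others, this is a genuine associativity-type statement linking a left-nested term with a right-nested one rather than a local simplification: the left side rewrites as $\big((x\to y')\to 0\big)\to z$ and is then accessible to~(I), but the right side only becomes a bracketed product after a pad such as $x=0'\to x$, so both sides must be fully expanded and then matched term by term, and it is precisely here that the commutation identities among (f)--(j) (notably (h) and (j)) are invoked to move the stray $0\to(\,\cdot\,)$ and negation symbols into alignment. I would therefore place~(k) last and concentrate the bulk of the bookkeeping there, presenting the other ten as short chains; as the authors remark elsewhere in the paper, Prover~9 can serve as a safety net to confirm each individual identity.
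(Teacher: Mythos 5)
Your preparatory observations are sound: the two specializations of (I) are computed correctly, and the derivations you indicate for items (a), (b), (c) (and similarly (f)) do go through exactly as you describe. But as a proof of the whole lemma the proposal has a genuine gap: for the items that carry the real weight --- (d), (e), (h)--(j) and above all (k) --- you supply no derivation, only the assertion that each is ``a short computation'' or, for (k), that expanding both sides by (I) and matching with the help of (f)--(j) will close the argument. That is precisely where the work lies, and there is concrete reason to doubt it is routine: the paper itself does not prove any of these identities, but imports them from three different earlier papers ((a)--(c) and (f)--(j) from \cite[Lemma 2.7]{cornejo2015implication}, (d)--(e) from \cite[Lemma 2.6]{cornejo2016order}, and (k) from \cite[Lemma 3.3]{cornejo2016derived}), where (e) and especially (k) are established by longer chains of auxiliary identities, not by a single application of (I). For (k) in particular, your padding gives $x\to(y\to z)=(x'\to 0)\to(y\to z)=\bigl(((y\to z)'\to x')\to(0\to(y\to z))'\bigr)'$ on one side and $(x\to y')'\to z=((x\to y')\to 0)\to z=\bigl((z'\to(x\to y'))\to(0\to z)'\bigr)'$ on the other; items (h) and (j) only let you move a leading $0\to$ across an arrow or trade a primed argument, so they do not by themselves reconcile these two expansions, and ``match term by term'' remains a hope rather than a proof. (An appeal to Prover9 as a safety net is likewise not a substitute for the derivations.)

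In short, your overall strategy --- pure equational reasoning in $\mathcal{I}_{2,0}$ from (I) together with Lemmas \ref{general_properties_equiv} and \ref{general_properties} --- is viable in principle, and it differs from the paper's own treatment, which disposes of the lemma entirely by citation; but as written the proposal establishes only the easy third of the statement and leaves the substantive items, notably (e) and (k), unproved.
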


\begin{proof}

	For the proofs of items (\ref{281014_05}), (\ref{291014_02}),  
	(\ref{291014_10}), 
	(\ref{031114_07}), (\ref{071114_01}),  (\ref{071114_04}), 
	(\ref{191114_05})  
	and (\ref{281114_01}) 
	we refer the reader to \cite[Lemma 2.7]{cornejo2015implication}.  
	The proofs of items 
	(\ref{311014_06}) and (\ref{031114_04})
	are given in \cite[Lemma 2.6]{cornejo2016order}, 
	while the proof of the item (\ref{140715_20}) can be found in \cite[Lemma 3.3]{cornejo2016derived}.	
\end{proof}

\begin{Lemma}  \label{lemma_070616_01}
	Let $\mathbf A \in \mathcal{I}_{2,0}$ such that $\mathbf A \models 0 \to x \approx x$, then $\mathbf A \models (x \to y)' \approx x' \to y'$.
\end{Lemma}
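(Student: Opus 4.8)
Let $\mathbf A \in \mathcal{I}_{2,0}$ with $\mathbf A \models 0 \to x \approx x$. Then $\mathbf A \models (x \to y)' \approx x' \to y'$.

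The plan is to exploit the hypothesis $0 \to x \approx x$ to simplify the many identities of Lemma~\ref{general_properties3} that have a leading ``$0 \to$''. In particular, item~(\ref{071114_01}), which reads $0 \to (x' \to y)' \approx x \to (0 \to y')$, collapses under the hypothesis: the left-hand side becomes $(x' \to y)'$ and the right-hand side becomes $x \to y'$, so we immediately get
\[
(x' \to y)' \approx x \to y'.
\]
Since $\mathbf A \in \mathcal{I}_{2,0}$, we have $x'' \approx x$, so this identity is really a statement about all elements: replacing $x$ by $x'$ and using $x'' \approx x$ turns $(x' \to y)' \approx x \to y'$ into $(x \to y)' \approx x' \to y'$, which is exactly what we want.

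So the core of the argument is the single substitution $x \mapsto x'$ in the specialized form of Lemma~\ref{general_properties3}(\ref{071114_01}), together with involutivity. First I would write down Lemma~\ref{general_properties3}(\ref{071114_01}) and apply the hypothesis $0 \to u \approx u$ (instantiated at $u = (x' \to y)'$ on the left and at $u = y'$ on the right) to obtain $(x' \to y)' \approx x \to y'$. Then I would substitute $x'$ for $x$ throughout, yielding $(x'' \to y)' \approx x' \to y'$, and finally invoke $x'' \approx x$ (available since $\mathbf A \in \mathcal{I}_{2,0}$) on the left-hand side to conclude $(x \to y)' \approx x' \to y'$.

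I do not anticipate a genuine obstacle here; the only thing to be careful about is the bookkeeping of which occurrences of ``$0 \to$'' may be removed. The hypothesis $0 \to x \approx x$ is a universally quantified identity, so it may be applied to any subterm of the shape $0 \to t$, which is why both sides of~(\ref{071114_01}) simplify cleanly. An alternative route, if one prefers not to cite~(\ref{071114_01}), would be to combine Lemma~\ref{general_properties3}(\ref{071114_04}) ($0 \to (x \to y) \approx x \to (0 \to y)$) and Lemma~\ref{general_properties3}(\ref{191114_05}) ($0 \to (x \to y')' \approx 0 \to (x' \to y)$): under the hypothesis the latter gives $(x \to y')' \approx x' \to y$, and then the substitution $y \mapsto y'$ plus $y'' \approx y$ again produces $(x \to y)' \approx x' \to y'$. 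Either way the proof is a two- or three-line calculation once the hypothesis is used to delete the leading $0 \to$.
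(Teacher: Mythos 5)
Your proof is correct and is essentially the paper's own argument: both hinge on Lemma \ref{general_properties3}(\ref{071114_01}) together with the hypothesis $0 \to x \approx x$ and involutivity. Your version is marginally more direct, since you apply the hypothesis to both sides of (\ref{071114_01}) at once and so never need Lemma \ref{general_properties3}(\ref{071114_04}), which the paper uses as an intermediate step.
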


\begin{proof}
	Let $a,b \in A$. Hence, we have that
	\[
	\begin{array}{lcll}
	a' \to b'	& = & 0 \to (a' \to b') & \mbox{by hyphotesis} \\
	&=& a' \to (0 \to b') & \mbox{by 
		Lemma \ref{general_properties2} (\ref{071114_04})}\\
	& = &  0 \to (a'' \to b)' & \mbox{by Lemma \ref{general_properties3} (\ref{071114_01})}\\ 	
	& = &  0 \to (a \to b)' &  \mbox{by (\ref{eq_I20})} \\ 
	& = & (a \to b)'  & \mbox{by hyphotesis}.
	\end{array}
	\]
	This completes the proof.
\end{proof}

\vspace{1cm}
\section{Symmetric $\mathcal{I}$-zroupoids of Bol-Moufang type}

Recall that the variety $\mathcal{S} = \mathcal{I}_{2,0} \cap \mathcal{MC}$,  
which was investigated in \cite{cornejo2015implication}.  
Throughout this section, $\mathbf{A} \in \mathcal{S}$.

In this section our goal is to prove that 47 of the subvarieties of $\mathcal{S}$ of Bol-Moufang type are equal to $\mathcal{SL}$ and hence are equal to each other.  First, we present some new properties of $\mathcal{S}$ which will be useful later in this paper.

\begin{Lemma} \label{properties_of_I20_MC}
	 
	$\mathbf A$ satisfies:
	\begin{enumerate}[{\rm (a)}]
		\item $x \to (y \to z) \approx y \to (x \to z)$,  \label{310516_01}
		\item $x' \to y \approx y' \to x$,  \label{310516_02}
		\item  $x \to y \approx y' \to x'$, \label{021017_01}
		\item  $x \to y' \approx y \to x'$.  \label{021017_02}
	\end{enumerate}
\end{Lemma}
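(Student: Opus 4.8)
The plan is to obtain parts (b), (c) and (d) by routine equational manipulation out of meet-commutativity \eqref{eq_MC} together with the involution identity $x'' \approx x$ \eqref{eq_I20}, and to obtain part (a) from the identity $(x \to y')' \to z \approx x \to (y \to z)$ of Lemma \ref{general_properties2} (\ref{140715_20}) (available here since $\mathcal{S} \subseteq \mathcal{I}_{2,0}$) by inserting a single application of \eqref{eq_MC} into a ``reassociation''. Nothing in this is deep; the only thing one must ``see'' is which earlier identity drives (a).

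For (d): since $x \wedge y := (x \to y')'$, identity \eqref{eq_MC} reads $(x \to y')' \approx (y \to x')'$; applying $(\,\cdot\,)'$ to both sides and cancelling the resulting double primes via \eqref{eq_I20} yields $x \to y' \approx y \to x'$. For (c): replace $y$ by $y'$ in (d) and use $y'' \approx y$ to get $x \to y \approx x \to y'' \approx y' \to x'$. For (b): replace $x$ by $x'$ in (c) and use $x'' \approx x$ to get $x' \to y \approx y' \to x'' \approx y' \to x$. Thus, once (d) is established, (b) and (c) reduce to pure symbol-pushing.

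For (a), I would write the chain
\[
x \to (y \to z) \;\approx\; (x \to y')' \to z \;\approx\; (y \to x')' \to z \;\approx\; y \to (x \to z),
\]
where the first and third equalities are two instances of Lemma \ref{general_properties2} (\ref{140715_20}) — the first read right-to-left, and the third read left-to-right with the variable playing the ``$y$'' role in that identity taken to be $x$ — while the middle equality is \eqref{eq_MC} applied to $(x \to y')' = x \wedge y$. The main (and essentially the only) obstacle is clerical: one must match the variable slots in Lemma \ref{general_properties2} (\ref{140715_20}) carefully on its second application, and check that its hypothesis ($\mathbf A \in \mathcal{I}_{2,0}$) is in force, which it is. I would therefore prove (a) first, then (d), then (c), then (b).
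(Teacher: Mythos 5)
Your proposal is correct and follows essentially the same route as the paper: part (a) is proved by exactly the same three-step chain using Lemma \ref{general_properties3} (\ref{140715_20}) twice with (\ref{eq_MC}) in the middle, and parts (b)--(d) are, as in the paper, routine consequences of (\ref{eq_MC}) and (\ref{eq_I20}), differing only in that you derive (d) first and then (c) and (b) by substitution, whereas the paper derives (b) directly and then (c), (d) from it.
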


\begin{proof} Let $a,b,c \in A$.  Then
	\begin{enumerate}[{\rm (a)}]
		\item 
		$$
		\begin{array}{lcll}
		a \to (b \to c)	& = & (a \to b')' \to c & \mbox{by Lemma \ref{general_properties3} (\ref{140715_20})} \\
		& = & (b \to a')' \to c & \mbox{by (\ref{eq_MC})} \\
		& = & b \to (a \to c) & \mbox{by Lemma \ref{general_properties3} (\ref{140715_20})}. 
		\end{array}
		$$
		\item 
		$$
		\begin{array}{lcll}
		a' \to b	& = & (a' \to b'')'' & \mbox{by (\ref{eq_I20})} \\
		& = & (b' \to a'')'' & \mbox{by (\ref{eq_MC})} \\
		& = & b' \to a & \mbox{by (\ref{eq_I20}).} 
		\end{array}
		$$
\item  
$$
\begin{array}{lcll}
a \to b & = & a'' \to b & \mbox{by (\ref{eq_I20})} \\
& = & b' \to a' & \mbox{by item (\ref{310516_02})}
\end{array}
$$

\item 
$$
\begin{array}{lcll}
a \to b' & = & a'' \to b' & \mbox{by (\ref{eq_I20})} \\
& = & b'' \to a' & \mbox{by item (\ref{310516_02})} \\
& = & b \to a' & \mbox{by (\ref{eq_I20})} 
\end{array}
$$
\end{enumerate}
\end{proof}

\begin{Lemma} \label{310516_09}
	Let $\mathbf A \models x \to x \approx x$. Then $\mathbf A \models x' \approx x$. 
\end{Lemma}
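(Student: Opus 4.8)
The plan is to obtain $x' \approx x$ almost immediately from the identities already available in $\mathcal{S}$, and to bring in the hypothesis $x \to x \approx x$ only at the last step. The key observation is that Lemma~\ref{properties_of_I20_MC}(\ref{021017_01}) supplies the ``contrapositive'' identity $x \to y \approx y' \to x'$, valid throughout $\mathcal{S}$, and this is all that is needed.

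First I would specialize that identity by setting $y := x$, which yields $x \to x \approx x' \to x'$ in every member of $\mathcal{S}$. (One can reach the same equation along a slightly different route: instantiate Lemma~\ref{properties_of_I20_MC}(\ref{021017_02}), namely $x \to y' \approx y \to x'$, at $y := x'$ and then simplify $(x')'$ to $x$ using (\ref{eq_I20}); either way the outcome is $x \to x \approx x' \to x'$.) Then, applying the hypothesis $x \to x \approx x$ to the left-hand side gives $x$, and applying it once more with $x$ replaced by $x'$ collapses the right-hand side to $x'$. Hence $x \approx x'$, which is the claim.

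I expect no real obstacle here: the substance of the argument was front-loaded into Lemma~\ref{properties_of_I20_MC}, where the consequences of involutivity~(\ref{eq_I20}), meet-commutativity~(\ref{eq_MC}), and the associativity-flavoured identity of Lemma~\ref{general_properties3}(\ref{140715_20}) were already combined. Given those, the present statement reduces to a single substitution plus two uses of idempotency. The only point requiring a bit of care is to remember to apply the hypothesis $x \to x \approx x$ at \emph{both} $x$ and $x'$; everything else is routine.
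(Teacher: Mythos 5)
Your proof is correct and is essentially the paper's own argument: the paper likewise converts $x \to x$ into $x' \to x'$ using (\ref{eq_I20}) and (\ref{eq_MC}) (computing $a' = (a \to a)' = (a \to a'')' = (a' \to a')' = a'' = a$) and then applies the hypothesis at both $a$ and $a'$. Your citation of Lemma~\ref{properties_of_I20_MC}(\ref{021017_01}) merely packages that same involution-plus-meet-commutativity step, so the two proofs coincide in substance.
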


\begin{proof}
	Let $a \in A$. Then
	$$
	\begin{array}{lcll}
	a' & = & (a \to a)' & \mbox{by hypothesis} \\
	& = & (a \to a'')' & \mbox{by (\ref{eq_I20})} \\
	& = & (a' \to a')' & \mbox{by (\ref{eq_MC})} \\
	& = & a'' & \mbox{by hypothesis} \\
	& = & a & \mbox{by (\ref{eq_I20})}.
	\end{array}
	$$
\end{proof}

\begin{Lemma} \label{010616_01}
	
	Let $\mathbf A \models x' \approx x \to x$ and $\mathbf A \models 0' \approx 0$. Then $\mathbf A \models x' \approx x$.
\end{Lemma}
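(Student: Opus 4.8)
The goal is to show that if $\mathbf A \in \mathcal S$ satisfies $x' \approx x \to x$ and $0' \approx 0$, then $x' \approx x$. The natural strategy is to reduce this to Lemma~\ref{310516_09}, which already gives $x' \approx x$ whenever $x \to x \approx x$ holds. So the real target becomes: derive the idempotent law $x \to x \approx x$ from the two hypotheses. Given the hypothesis $x' \approx x \to x$, the idempotent law is equivalent to showing $x' \approx x$ directly, so this looks circular at first glance; the point is that the hypothesis $x' \approx x \to x$, combined with involutivity $x'' \approx x$ from $\mathcal S$, forces $x \to x$ to be \emph{involutive} as a unary term, and then one can try to squeeze out idempotency.

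First I would record that, from $x' \approx x \to x$ and $x'' \approx x$, we get $(x \to x) \to (x \to x) \approx (x \to x)' \approx x'' \approx x$, i.e. the element $x \to x$ is idempotent for every $x$. In particular, writing $t := x \to x$, we have $t \to t \approx t$. Now apply Lemma~\ref{310516_09} in the form already available: actually Lemma~\ref{310516_09} is stated for an algebra satisfying $x \to x \approx x$ identically, not pointwise, so instead I would work pointwise. For a fixed $a \in A$, set $b := a \to a$; then $b \to b = b$. By the computation in the proof of Lemma~\ref{310516_09} applied to $b$ (which only uses $b \to b = b$, (I$_{2,0}$) and (MC)), we obtain $b' = b$, that is, $(a \to a)' \approx a \to a$. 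But $(a \to a)' \approx (a')' \approx a$ by the hypothesis $x' \approx x \to x$ and involutivity. Combining, $a \to a \approx a$ for all $a$, i.e. the idempotent law holds identically in $\mathbf A$.

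Once $x \to x \approx x$ is established as an identity of $\mathbf A$, Lemma~\ref{310516_09} applies directly and yields $x' \approx x$, completing the proof. (I note that the hypothesis $0' \approx 0$ was not needed in this argument; it is plausibly listed because the companion statement or a uniform presentation wants it, or because an alternative route uses it. If the intended proof instead goes through $0$-based identities from Lemma~\ref{general_properties} and Lemma~\ref{general_properties2}, one could alternatively start from $0' \approx 0 \to 0 \approx 0$, use Lemma~\ref{general_properties2}(\ref{TXX}) to get $0' \to x \approx x$, and propagate; but I expect the idempotency route above to be the cleanest.)

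The main obstacle is the apparent circularity: the hypothesis $x' \approx x \to x$ makes "$x' \approx x$" and "$x \to x \approx x$" look interchangeable, so one must be careful to use the \emph{extra} structure — involutivity and meet-commutativity of $\mathcal S$ — to break the symmetry. The key leverage is that $(x\to x)\to(x\to x) = (x\to x)' = x'' = x$ turns $x \to x$ into an idempotent element \emph{and} a double-negation of $x$ simultaneously, and Lemma~\ref{310516_09}'s computation shows idempotent elements are fixed by $'$; equating the two descriptions of $(x\to x)'$ collapses everything.
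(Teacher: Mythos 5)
There is a genuine gap, and it is exactly the circularity you yourself flagged. From the hypothesis applied at $x$ and at $x \to x$, together with involution $x'' \approx x$, what you actually obtain is $(x \to x) \to (x \to x) \approx (x \to x)' \approx x'' \approx x$, i.e.\ $t \to t \approx x$ where $t := x \to x$. This does \emph{not} say that $t$ is idempotent: since $t \to t \approx x$, the claim $t \to t \approx t$ is literally the identity $x \approx x \to x$ that you are trying to prove (equivalently, since $t' \approx x'' \approx x$, idempotency of $t$ is the assertion $t \approx t'$, again the goal for elements of the form $x\to x$). So the sentence ``the element $x \to x$ is idempotent for every $x$'' is an unjustified restatement of the conclusion, and everything after it collapses. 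A secondary inaccuracy: even if you had $b \to b = b$ pointwise, the computation in Lemma \ref{310516_09} does not use only $b \to b = b$, (I$_{2,0}$) and (MC); the step $(b' \to b')' = b''$ uses idempotency at $b'$ as well. In your situation $b' = a$ and $b' \to b' = a \to a = b$, so that step would once more require $b \approx b'$, i.e.\ $a' \approx a$ --- the conclusion.

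The unused hypothesis $0' \approx 0$ should have been the warning sign. The paper's proof leans on it essentially: it writes $a = a'' = (a\to a)' = (a \to a) \to 0 = (a \to a) \to 0'$, applies the defining identity (I) to this last expression, and then simplifies --- using $0'' \approx 0$, the hypothesis $0' \approx 0$ again, Lemma \ref{general_properties3} (\ref{291014_10}) and Lemma \ref{general_properties_equiv} (\ref{LeftImplicationwithtilde}) --- down to $(a' \to a)' = a'$, giving $a = a'$ directly. An argument confined, as yours is, to (I$_{2,0}$), (MC) and the hypothesis $x' \approx x \to x$ cannot follow this route; to repair the proof you need to bring $0' \approx 0$ and identity (I) into play along these lines rather than trying to reduce to Lemma \ref{310516_09}.
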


\begin{proof}
	Let $a \in A$. Then
	$$
	\begin{array}{lcll}
	a & = & a'' & \mbox{} \\
	& = & (a \to a)' & \mbox{by hypothesis} \\
	& = & (a \to a) \to 0 & \mbox{} \\
	& = & (a \to a) \to 0'  & \mbox{by hypothesis} \\
	& = & \{(0'' \to a) \to (a \to 0')'\}' & \mbox{by (I)} \\
	& = & \{(0 \to a) \to (a \to 0')'\}' & \mbox{by (\ref{eq_I20})} \\
	& = & \{(0 \to a) \to (a \to 0)'\}' & \mbox{by hypothesis} \\
	& = & \{(0 \to a) \to a''\}' & \mbox{} \\
	& = & \{(0 \to a) \to a\}' & \mbox{by (\ref{eq_I20})} \\
	& = & \{(a \to a) \to a\}' & \mbox{by Lemma \ref{general_properties3} (\ref{291014_10})} \\
	& = & (a' \to a)' & \mbox{by hypothesis} \\
	& = & a' & \mbox{by Lemma \ref{general_properties_equiv} (\ref{LeftImplicationwithtilde})},
	\end{array}
	$$
	completing the proof.
\end{proof}

\begin{Lemma} \label{040716_03}
	Let 	 
	$\mathbf A \models 0 \to (x \to x) \approx x \to x.$ Then \\
	$\mathbf A \models (x \to x) \to (y \to z)  \approx ((x \to x) \to y) \to z.$
\end{Lemma}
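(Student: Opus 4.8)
The plan is to abbreviate $e := x \to x$, so that the hypothesis becomes $0 \to e \approx e$ (and, more generally, $0 \to (t \to t) \approx t \to t$ for every term $t$), and the identity to be proved reads $e \to (y \to z) \approx (e \to y) \to z$. By Lemma \ref{general_properties2} (\ref{140715_20}) one has $e \to (y \to z) \approx (e \to y')' \to z$, so the whole statement reduces to the single ``core identity''
$$
(e \to y')' \approx e \to y. \qquad (\ast)
$$
I would obtain $(\ast)$ by proving two auxiliary identities and then combining them.

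\medskip

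\noindent\emph{Auxiliary identity 1: $(x \to x) \to 0' \approx x \to x$.} This is the step where the hypothesis is genuinely used, via the computation
$$
\begin{array}{lcll}
(x \to x) \to 0' & \approx & 0 \to (x \to x)' & \mbox{by Lemma \ref{general_properties} (\ref{cuasiConmutativeOfImplic})} \\
 & \approx & 0 \to (x' \to x') & \mbox{by Lemma \ref{general_properties2} (\ref{191114_05}) and (\ref{eq_I20})} \\
 & \approx & x' \to x' & \mbox{by the hypothesis} \\
 & \approx & x \to x & \mbox{by Lemma \ref{properties_of_I20_MC} (\ref{021017_01});}
\end{array}
$$
in the second step Lemma \ref{general_properties2} (\ref{191114_05}) is applied in the form $0 \to (x \to (x')')' \approx 0 \to (x' \to x')$ together with $(x')' = x'' = x$, and in the third step the hypothesis is instantiated at $x'$.

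\medskip

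\noindent\emph{Auxiliary identity 2: $(e \to y') \to y \approx (e \to y')'$.} Instantiating the axiom (I) with its three variables set to $0$, $e$, $y$ respectively gives $(0 \to e) \to y \approx ((y' \to 0) \to (e \to y)')'$; since $y' \to 0 = y'' = y$ by (\ref{eq_I20}) and $0 \to e \approx e$ by the hypothesis, this collapses to $e \to y \approx (y \to (e \to y)')'$. Applying the operation $w \mapsto w'$ to both sides and using (\ref{eq_I20}) gives $(e \to y)' \approx y \to (e \to y)'$; combining this with Lemma \ref{properties_of_I20_MC} (\ref{021017_02}) (which yields $(e \to y) \to y' \approx y \to (e \to y)'$) we obtain $(e \to y) \to y' \approx (e \to y)'$, and then replacing $y$ by $y'$ and using (\ref{eq_I20}) produces the claimed $(e \to y') \to y \approx (e \to y')'$.

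\medskip

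To finish, I combine the two auxiliary identities: by Lemma \ref{general_properties2} (\ref{281014_05}) we have $(e \to y') \to y \approx (e \to 0') \to y$, which by Auxiliary identity 1 equals $e \to y$; on the other hand Auxiliary identity 2 says that the same left-hand side equals $(e \to y')'$. Hence $(e \to y')' \approx e \to y$, which is $(\ast)$, and the lemma follows. I expect the main obstacle to be discovering Auxiliary identity 1 and noticing that Lemma \ref{general_properties2} (\ref{191114_05}) together with meet-commutativity (Lemma \ref{properties_of_I20_MC} (\ref{021017_01})) is exactly what converts the hypothesis about $x \to x$ into $e \to 0' \approx e$; once that is available, the remaining manipulations are routine, the only care needed being not to trivialise the axiom (I) by placing $0$ in its rightmost slot.
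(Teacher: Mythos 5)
Your proof is correct, and every lemma you invoke is available for $\mathbf A \in \mathcal S$; the only real difference from the paper is in how the work is organized. Your core identity $(\ast)$, $(e \to y')' \approx e \to y$ with $e := x \to x$, is (after priming both sides and using (\ref{eq_I20})) exactly the paper's intermediate identity $(x \to x) \to y' \approx ((x \to x) \to y)'$, so both arguments hinge on the same fact that $'$ commutes with $(x\to x) \to {-}$. The paper proves that identity by one long equational chain starting from the hypothesis and Lemma \ref{general_properties2} (\ref{291014_10}), then assembles the conclusion with Lemma \ref{properties_of_I20_MC} (\ref{310516_01}) and (\ref{310516_02}); you instead reduce the whole statement at the outset via Lemma \ref{general_properties3} (\ref{140715_20}), which neatly replaces that final assembly, and you split the chain into two reusable pieces: your Auxiliary identity 1 ($e \to 0' \approx e$, obtained from Lemma \ref{general_properties} (\ref{cuasiConmutativeOfImplic}), Lemma \ref{general_properties2} (\ref{191114_05}) and the hypothesis at $x'$, which is precisely the tail of the paper's chain) and your Auxiliary identity 2 ($(e \to y') \to y \approx (e \to y')'$, obtained from the substitution $(0, e, y)$ in (I) together with Lemma \ref{properties_of_I20_MC} (\ref{021017_02}), where the paper instead routes through Lemma \ref{general_properties2} (\ref{291014_10}) before applying (I)). The two are glued by Lemma \ref{general_properties2} (\ref{281014_05}) just as in the paper. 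Net effect: your version is modular and slightly shorter to check, while the paper's single chain avoids naming intermediate identities; both are complete proofs of the same statement.
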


\begin{proof}
	Let $a,b,c \in A$. Now,
	\[
	\begin{array}{lcll}
	(a \to a) \to b' & = & (0 \to (a \to a)) \to b' & \mbox{by hypothesis} \\
	& = & (b' \to (a \to a)) \to b' & \mbox{by Lemma \ref{general_properties2} (\ref{291014_10})} \\
	& = & \{(b'' \to b') \to ((a \to a) \to b')' \}' & \mbox{by (I)} \\
	& = & \{b' \to ((a \to a) \to b')' \}' & \mbox{by Lemma \ref{general_properties_equiv} (\ref{LeftImplicationwithtilde})} \\
	
	& = &  \{((a \to a) \to b')'' \to b \}' & \mbox{by Lemma \ref{properties_of_I20_MC} (\ref{310516_02})} \\
	& = &   \{((a \to a) \to b') \to b \}'  & \mbox{by (\ref{eq_I20})} \\		

	& = & \{((a \to a) \to 0') \to b \}' & \mbox{by Lemma \ref{general_properties2} (\ref{281014_05})} \\
	& = & \{(0 \to (a \to a)') \to b \}' & \mbox{by Lemma \ref{general_properties} (\ref{cuasiConmutativeOfImplic})} \\
	& = &  \{(0 \to (a \to a'')') \to b \}'  &  \mbox{by (\ref{eq_I20})} \\
	& = & \{(0 \to (a' \to a')) \to b \}' & \mbox{by Lemma \ref{general_properties2} (\ref{191114_05})} \\
	& = & \{(a' \to a') \to b \}' & \mbox{by hypothesis} \\
	& = & \{(a \to a) \to b \}' & \mbox{by Lemma \ref{properties_of_I20_MC} (\ref{310516_02})}. 
	\end{array}
	\]
	Therefore,  
	\begin{equation} \rm\label{040716_02}
	\mathbf A \models (x \to x) \to y' \approx ((x \to x) \to y)'.
	\end{equation}
	Hence,
$$
\begin{array}{llll}
	((a \to a) \to b) \to c & =  c' \to [(a \to a) \to b]' & \mbox{by Lemma \ref{properties_of_I20_MC} (\ref{310516_02})} \\
	& =  c' \to [(a \to a) \to b'] & \mbox{by (\ref{040716_02})} \\
	& =  (a \to a) \to (c' \to b') & \mbox{by Lemma \ref{properties_of_I20_MC} (\ref{310516_01})} \\
	& =  (a \to a) \to (b \to c) & \mbox{by Lemma \ref{properties_of_I20_MC} (\ref{310516_02})}. 
\end{array}
$$
	This completes the proof.
\end{proof}

\begin{Lemma} \label{including_relations_of_BA}
	$\mathcal{BA} \subseteq \mathcal A_{12}$.
\end{Lemma}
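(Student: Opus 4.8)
The goal is to show that every Boolean algebra (viewed in the language $\langle \to, 0\rangle$) satisfies the identity $(A_{12})$, namely $x \to (x \to (y \to z)) \approx x \to ((x \to y) \to z)$. My plan is to reduce both sides to a common normal form using the fact that in $\mathcal{BA}$ the implication is the classical material conditional $a \to b = a' \lor b$ (where $a' = a \to 0$ is the Boolean complement), and the underlying structure is a Boolean algebra in the usual sense. Concretely, I would first recall (or cite, from the definitions of $\mathcal{DM}$ and $\mathcal{BA}$ in the paper, or from \cite{sankappanavarMorgan2012}) that $\mathbf{2_b}$ generates $\mathcal{BA}$, so it suffices to verify $(A_{12})$ on the two-element Boolean algebra; alternatively, compute symbolically.

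The key computational steps, working in $\mathcal{BA}$ with $a\to b = a' \lor b$: the left-hand side is $x \to (x \to (y\to z)) = x' \lor (x' \lor (y' \lor z)) = x' \lor y' \lor z$ using idempotence and associativity of $\lor$ and the fact that $x \to x \to w$ collapses the repeated $x'$. The right-hand side is $x \to ((x\to y)\to z) = x' \lor ((x'\lor y)' \lor z) = x' \lor ((x \land y') \lor z) = x' \lor (x\land y') \lor z$; then by absorption $x' \lor (x \land y') = x' \lor y'$, so the right-hand side also equals $x' \lor y' \lor z$. Hence both sides agree and $(A_{12})$ holds in $\mathcal{BA}$. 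Since $\mathcal{A}_{12}$ is by definition the subvariety of $\mathcal{S}$ axiomatized by $(A_{12})$, and since $\mathcal{BA} \subseteq \mathcal{DM} \subseteq \mathcal{I}$ with $\mathcal{BA}$ clearly also involutive ($x''\approx x$, since Boolean complement is an involution) and meet-commutative ($x\land y = x \land y$ is the Boolean meet, which is commutative), we get $\mathcal{BA} \subseteq \mathcal{S}$, and therefore $\mathcal{BA} \subseteq \mathcal{A}_{12}$.

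The main (very minor) obstacle is bookkeeping: one must be careful that $x'$ in this paper denotes $x\to 0$, which in $\mathcal{BA}$ is genuinely the Boolean complement, and that the paper's $\land$ defined by $x\land y := (x\to y')'$ really does coincide with the Boolean meet on $\mathcal{BA}$ — this is immediate since $(x\to y')' = (x' \lor y')' = x \land y$. An alternative route that avoids even this, and is perhaps cleaner to present, is simply to note that $\mathcal{BA} = \mathcal{V}(\mathbf{2_b})$ (stated in the Preliminaries) and to check $(A_{12})$ directly on $\mathbf{2_b}$ by a finite case analysis on $x,y,z \in \{0,1\}$; since $(A_{12})$ is an identity, holding on the generating algebra suffices. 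Either way the verification is routine and the lemma follows.
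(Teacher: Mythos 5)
Your proposal is correct and matches the paper's argument: the paper simply checks that $\mathbf{2_b}$ satisfies $(A_{12})$ and invokes $\mathcal{V}(\mathbf{2_b})=\mathcal{BA}$, which is exactly the ``alternative route'' you describe (your symbolic computation with $a\to b=a'\lor b$ is just a uniform way of doing that same routine verification). Your extra remark that $\mathcal{BA}\subseteq\mathcal{S}$, so that membership in $\mathcal{A}_{12}$ (defined relative to $\mathcal{S}$) makes sense, is a point the paper leaves implicit, and it is a harmless and welcome addition.
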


\begin{proof}
	It is routine to check that $\mathbf{2_b} \in \mathcal A_{12}$. 
	The proof is complete since we know $\mathcal{V}(\mathbf{2_b}) = \mathcal{BA}$.
\end{proof}

\begin{Lemma} \label{including_relations_of_SL}	 
	$\mathcal{SL} \subseteq \mathcal X_{ij}$, for  $\mathcal X \in \{\mathcal A,\mathcal B,\mathcal C,\mathcal D,\mathcal E,\mathcal F\}$ and for all $1 \leq i < j \leq 5$.	
\end{Lemma}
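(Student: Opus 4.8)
The plan is to show that an arbitrary member $\mathbf A$ of $\mathcal{SL}$ satisfies every Bol-Moufang identity $(X_{ij})$, so that $\mathcal{SL} \subseteq \mathcal X_{ij}$ for all choices of $\mathcal X$ and $i < j$. Since $\mathcal{SL} = \mathcal V(\mathbf{2_s})$ by \cite[Corollary 11.4]{cornejo2015implication}, it suffices to verify the inclusion on the single generating algebra $\mathbf{2_s}$; but a conceptually cleaner route is to use the term-equivalence recorded in the introduction: $\mathcal{SL}$ is term-equivalent to the variety of $\lor$-semilattices with least element $0$, with $x \to y$ corresponding to $x \lor y$ (and $x' = x \to 0 = x \lor 0 = x$). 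Under this correspondence, $\to$ is commutative, associative, and idempotent on every algebra in $\mathcal{SL}$.

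First I would observe that in any $\lor$-semilattice the four-variable product of a multiset of variables is independent of bracketing \emph{and} of order — it is just the join (supremum) of the variables occurring, counted without multiplicity. Concretely, for a word $w$ of length $4$ in the variables from $\{x,y,z\}$ with each of $x,y,z$ appearing at least once, the value of any bracketing of $w$ in $\mathbf A$ equals $x \lor y \lor z$, by idempotency (to collapse the repeated variable), commutativity, and associativity. Each of the six patterns A, B, C, D, E, F uses exactly the variables $x,y,z$ (one of them twice), so for a fixed pattern $X$ every bracketing $i$ evaluates to $x \lor y \lor z$ in $\mathbf A$; in particular bracketings $i$ and $j$ agree, which is precisely the identity $(X_{ij})$. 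Hence $\mathbf A \models (X_{ij})$, and since $\mathbf A \in \mathcal{SL}$ was arbitrary, $\mathcal{SL} \subseteq \mathcal X_{ij}$.

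There is essentially no obstacle here: the only thing to be careful about is making the term-equivalence explicit enough that ``$\to$ is a semilattice join'' is justified rather than merely asserted, and checking that the identities $x \to y \approx y \to x$ and $x' \approx x$ defining $\mathcal{SL}$ (relative to $\mathcal I$), together with the associativity of $\to$ that these force in the presence of (I), indeed give the three semilattice laws. If one prefers to avoid invoking the term-equivalence, the alternative is to recall that $\mathcal{SL} = \mathcal V(\mathbf{2_s})$ and simply note that on the two-element chain $\mathbf{2_s}$ the operation $\to$ is the Boolean/lattice join, so again every bracketing of every Bol-Moufang word computes the join of the variables present; a direct table check on $\mathbf{2_s}$ then closes the argument. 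Either way the proof is short, and the same uniform reasoning handles all $60$ identities at once.
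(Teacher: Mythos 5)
Your argument is correct, and it is a mild but genuine variation on the paper's proof. The paper simply checks, by a routine (finite) computation, that the two-element algebra $\mathbf{2_s}$ satisfies every identity $(X_{ij})$, and then concludes via $\mathcal{V}(\mathbf{2_s}) = \mathcal{SL}$. Your primary route instead invokes the term-equivalence of $\mathcal{SL}$ with $\lor$-semilattices having least element $0$ (from \cite{cornejo2015implication}), under which $\to$ is itself the join, hence commutative, associative and idempotent throughout $\mathcal{SL}$; then every bracketing of any length-$4$ word in $x,y,z$ collapses to $x \lor y \lor z$, so all $60$ identities hold uniformly, with no table verification at all. What your approach buys is a single conceptual argument covering every pattern and bracketing at once (and it would extend to weak associative laws of any length); what the paper's approach buys is minimal overhead, since checking a two-element table is immediate and needs only $\mathcal{V}(\mathbf{2_s}) = \mathcal{SL}$ rather than the full term-equivalence statement. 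Your closing remark that one could fall back on the $\mathbf{2_s}$ check is exactly the paper's proof, so both routes are available and both are sound; the only point to keep explicit, as you note, is that associativity of $\to$ on $\mathcal{SL}$ is being imported from the cited term-equivalence rather than derived on the spot.
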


\begin{proof}
	By a routine computation, it is easy to check that $\mathbf{2_s} \in  \mathcal X_{ij}$ for all $1 \leq i < j \leq 5$.
	Then the proof is complete, in view of 
	$\mathcal{V}(\mathbf{2_s}) = \mathcal{SL}$.	
\end{proof}

Let $\mathbf{M}$ denote the set that contains the subvarieties $\mathcal{A}_{13}$, $\mathcal{A}_{14}$, $\mathcal{A}_{15}$, $\mathcal{A}_{24}$, $\mathcal{A}_{34}$, 
$\mathcal{A}_{45}$, $\mathcal{B}_{12}$, $\mathcal{B}_{14}$, $\mathcal{B}_{15}$, $\mathcal{B}_{23}$, $\mathcal{B}_{24}$, $\mathcal{B}_{34}$, $\mathcal{B}_{35}$, $\mathcal{B}_{45}$, $\mathcal{C}_{12}$, $\mathcal{C}_{13}$, $\mathcal{C}_{14}$, $\mathcal{C}_{15}$, $\mathcal{C}_{23}$, $\mathcal{C}_{24}$, $\mathcal{C}_{34}$, $\mathcal{C}_{35}$, $\mathcal{C}_{45}$, $\mathcal{D}_{13}$, $\mathcal{D}_{14}$, $\mathcal{D}_{15}$, $\mathcal{D}_{23}$, $\mathcal{D}_{24}$, 
$\mathcal{D}_{34}$, $\mathcal{D}_{45}$, $\mathcal{E}_{12}$, $\mathcal{E}_{13}$, $\mathcal{E}_{14}$, $\mathcal{E}_{15}$, $\mathcal{E}_{23}$, $\mathcal{E}_{24}$, $\mathcal{E}_{34}$, $\mathcal{E}_{35}$, $\mathcal{E}_{45}$, $\mathcal{F}_{12}$, $\mathcal{F}_{14}$, $\mathcal{F}_{15}$, $\mathcal{F}_{23}$, $\mathcal{F}_{24}$, $\mathcal{F}_{34}$, $\mathcal{F}_{35}$, $\mathcal{F}_{45}$. Observe that $\mathbf{M}$ has $47$ elements.

\begin{Theorem} \label{theo_47_SL}
	If  $\mathcal X \in \mathbf{M}$ then 	$\mathcal X = \mathcal{SL} $. 
\end{Theorem}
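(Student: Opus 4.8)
The plan is to show that each of the 47 subvarieties in $\mathbf{M}$ satisfies the two defining identities of $\mathcal{SL}$ relative to $\mathcal{I}_{2,0}\cap\mathcal{MC}$, namely $x'\approx x$ and $x\to y\approx y\to x$; once $x'\approx x$ holds, Lemma~\ref{lemma_SL_I10_C} (together with the fact $\mathcal{MC}\cap\mathcal{I}_{1,0}=\mathcal{SL}$) finishes the job, so really the crux is to derive $x'\approx x$ in each case. Since $\mathcal{S}=\mathcal{I}_{2,0}\cap\mathcal{MC}$ already contains $\mathcal{SL}$ by Lemma~\ref{including_relations_of_SL}, the reverse inclusion $\mathcal{X}\subseteq\mathcal{SL}$ is what needs work, and by the preceding lemmas it suffices to prove $\mathcal{X}\models x'\approx x$. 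The strategy is to reduce all 47 cases to a small number of ``trigger'' identities that, in the presence of the ambient identities of $\mathcal{S}$, already force $x'\approx x$; I expect the key triggers to be exactly the hypotheses of Lemmas~\ref{310516_09}, \ref{010616_01}, and \ref{040716_03}: namely $x\to x\approx x$, and $x'\approx x\to x$ together with $0'\approx 0$, and the identity $0\to(x\to x)\approx x\to x$ combined with the reduction $(x\to x)\to(y\to z)\approx((x\to x)\to y)\to z$.

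The concrete steps are as follows. First, for each Bol-Moufang identity $(X_{ij})$ in $\mathbf{M}$, substitute the repeated variable appropriately (typically setting the two ``free'' variables equal to the repeated one, or setting one of them to $0$) to collapse the length-4 identity into a length-$\leq 3$ consequence. For many of the A-, C-, and F-type identities (where the repeated variable sits at one end), suitable substitutions such as $y:=x$, $z:=x$ will directly yield $x\to x\approx x$ or one of the other triggers, after which Lemma~\ref{310516_09} (or \ref{010616_01}) applies — note $0'\approx 0$ is free since $x''\approx x$ gives $0''\approx 0$ and in $\mathcal{I}_{2,0}$ one has $0'\approx 0$ as a standard consequence, or it can be obtained by instantiating. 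Second, for the harder B-, D-, E-type identities, where the repeated variable straddles the term, I would first use the commutativity-flavored identities of Lemma~\ref{properties_of_I20_MC} (especially $x\to(y\to z)\approx y\to(x\to z)$ and $x'\to y\approx y'\to x$) and the ``left-implication'' identities of Lemmas~\ref{general_properties_equiv}--\ref{general_properties3} to massage the identity into one of the recognizable trigger forms, possibly after specializing one variable to $0$. Third, organize the 47 cases into groups that share the same reduction, so that only a handful of genuinely distinct computations need to be displayed; the bookkeeping — keeping track of which substitution works for which $(X_{ij})$ — is routine but voluminous.

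The main obstacle I anticipate is not any single deep step but the case analysis itself: some of the 47 identities (for instance certain $\mathcal{B}$ and $\mathcal{D}$ cases with bracketing types $3$ and $4$) will not immediately yield $x\to x\approx x$ upon naive substitution, and will instead require first establishing an intermediate identity — most plausibly $0\to(x\to x)\approx x\to x$ — so that Lemma~\ref{040716_03} becomes available and produces the associativity-on-idempotents identity $(x\to x)\to(y\to z)\approx((x\to x)\to y)\to z$, which can then be fed back into the original Bol-Moufang identity to extract $x\to x\approx x$ (and hence, via Lemma~\ref{310516_09}, $x'\approx x$). Isolating the right intermediate lemmas and verifying that each of the 47 identities really does trigger one of them is where the work lies; I would lean on Prover~9 to confirm each implication $\mathcal{S}+(X_{ij})\vdash x'\approx x$ before writing out the human-readable derivations, exactly as the authors indicate they did. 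Once every $\mathcal{X}\in\mathbf{M}$ is shown to satisfy $x'\approx x$, the conclusion $\mathcal{X}=\mathcal{SL}$ is immediate from Lemmas~\ref{lemma_SL_I10_C} and \ref{including_relations_of_SL}.
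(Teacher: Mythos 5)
Your overall strategy coincides with the paper's: reduce to proving $\mathcal X \models x' \approx x$ via Lemmas \ref{including_relations_of_SL} and \ref{lemma_SL_I10_C}, and obtain that identity by substituting specific elements into each identity $(X_{ij})$ so as to land on one of the ``trigger'' hypotheses of Lemma \ref{310516_09} or Lemma \ref{010616_01}. However, your parenthetical claim that $0' \approx 0$ is ``free'' because it is ``a standard consequence'' in $\mathcal{I}_{2,0}$ is false: $\mathbf{2_b} \in \mathcal{I}_{2,0}$, yet there $0' = 0 \to 0 = 1 \neq 0$. This is not a cosmetic slip, because in $15$ of the $47$ cases one must first extract $0' \approx 0$ from the particular identity $(X_{ij})$ by one substitution before a second substitution can produce the trigger identity; your fallback ``or it can be obtained by instantiating'' is the correct route, but it is a genuine extra stage of the argument, not something available for free.

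More substantively, the entire content of the theorem is the verification that each of these $47$ particular identities --- and not, say, $(A_{23})$ or $(B_{25})$ --- forces $x' \approx x$, and your proposal describes the method without carrying out any of the $47$ derivations, deferring the crux to Prover~9. Moreover, not every case fits the patterns you list: $\mathcal A_{13}$ and $\mathcal B_{15}$ (and $\mathcal C_{12}$, which reduces to $\mathcal A_{13}$) require first deriving the intermediate identity $x \approx 0 \to x$ --- not $0' \approx 0$ and not $0 \to (x \to x) \approx x \to x$ --- and only then obtaining $x \to x \approx x$; several other cases are most easily dispatched by showing, via Lemma \ref{properties_of_I20_MC} (\ref{310516_01}), that they define the same variety as an already-settled one (e.g.\ $\mathcal C_{13} = \mathcal B_{12}$). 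Also, Lemma \ref{040716_03} plays no role here: its conclusion is an associativity statement rather than $x' \approx x$, and it enters only later when the surviving varieties are compared. So the skeleton is right, but the proof as written is incomplete and rests in one place on an incorrect auxiliary claim.
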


\begin{proof}
	In the proof below the following list of statements will be useful.
	
	\medskip
	
	\noindent \begin{minipage}{0.1 \textwidth}
		($\ast$)
	\end{minipage}
	\begin{minipage}{0.9 \textwidth}
		The identity (\ref{eq_I20}), 
		Lemma \ref{general_properties_equiv} (\ref{TXX}),
		Lemma \ref{general_properties_equiv} (\ref{LeftImplicationwithtilde}), 
		Lemma \ref{general_properties2} (\ref{281014_05}),
		Lemma \ref{general_properties3} (\ref{291014_02}),
		Lemma \ref{general_properties3} (\ref{031114_04}) and 
		Lemma \ref{properties_of_I20_MC} (\ref{310516_01}).
	\end{minipage} 
	
	\medskip
	Let $\mathcal X \in M$.  In view of Lemma \ref{including_relations_of_SL}, it suffices to prove that $\mathcal X \subseteq \mathcal{SL}$.  In fact, by Lemma \ref{lemma_SL_I10_C}, it suffices to prove that $\mathcal X \models x' \approx x$.  Let $\mathbf A \in \mathcal X$ and let 
		$a \in \mathbf A$.
	
	To facilitate a uniform presentation (and to make the proof shorter), we introduce the following notation, where $x_0, y_0, z_0 \in \mathbf A$:\\
	
	The notation	
	$$ \mathcal X/ x_0, y_0, z_0$$ denotes the following 
	statement:

	\begin{center}	 
		``Given an algebra $\mathbf A \in \mathcal X$, with $\mathcal X$ being defined by the identity $(X)$, consider $x_0, y_0, z_0 \in \mathbf A$. By substituting $x:= x_0, y:= y_0$ and $z:=z_0$ in $(X)$ and simplifying using the list ($\ast$), we obtain a proof of $\mathbf A \models x \to x \approx x$.''\\ 
	\end{center}
	
For example, suppose we want to prove that the statement ``$\mathcal A_{24}/ a, a', a$'' is true.   We start with $\mathbf A \in \mathcal A_{24}$ and observe that
$$
\begin{array}{lcll}
a & = & a' \to a & \mbox{by Lemma \ref{general_properties_equiv} (\ref{LeftImplicationwithtilde})} \\
& = & (a'' \to a') \to a & \mbox{by Lemma \ref{general_properties_equiv} (\ref{LeftImplicationwithtilde})} \\
& = & (a \to a') \to a & \mbox{by (\ref{eq_I20})} \\
& = & (a \to (a \to a')) \to a & \mbox{by Lemma \ref{general_properties3} (\ref{031114_04})} \\
& = & a \to ((a \to a') \to a) & \mbox{by ($A_{2 4}$)} \\
& = & a \to ((a'' \to a') \to a) & \mbox{by (\ref{eq_I20})} \\
& = & a \to (a' \to a) & \mbox{by Lemma \ref{general_properties_equiv} (\ref{LeftImplicationwithtilde})} \\
& = & a \to a & \mbox{by Lemma \ref{general_properties_equiv} (\ref{LeftImplicationwithtilde}).} 
\end{array}
$$
Consequently, $$\mathbf A \models x \approx x \to x.$$
By Lemma \ref{310516_09}, $$\mathbf A \models x \approx x'.$$

		We divide the 47 varieties under consideration into several groups (again, with a view to making this proof shorter) 
	as follows:

	Firstly, we consider the varieties associated with the following statements:
	
	\begin{multicols}{3}
		
		\begin{enumerate}
			
			\item $\mathcal A_{24}/ a, a', a$,  
			
			\item  $\mathcal A_{45}/ a, 0, 0$,
			
			\item $\mathcal B_{12}/ a',a,0$,
			
			\item $\mathcal B_{14}/ a,a,0$,
			
			\item $\mathcal B_{23}/ a',0,0$,
			
			\item $\mathcal B_{24}/ a, 0', 0$, 
			
			\item $\mathcal B_{34}/ 0', a, a$,  
			
			\item $\mathcal B_{35}/ 0', a, a$,  
			
			\item $\mathcal B_{45}/ a, 0', 0$,

			\item $\mathcal C_{15}/ 0', a, 0$,

			\item $\mathcal C_{23}/ 0', a, 0$,

			\item $\mathcal C_{24}/ a, 0, a$,

			\item $\mathcal C_{35}/ 0', a, 0$,  
			
			\item $\mathcal C_{45}/ a, 0, a$, 
			
			\item $\mathcal D_{13}/ a, a', a'$, 
			
			\item $\mathcal D_{14}/ a, a, a'$, 
			
			\item $\mathcal D_{23}/ a, 0', 0'$, 
			
			\item $\mathcal D_{24}/ a, 0', 0'$, 
			
			\item $\mathcal D_{45}/ a, 0, 0$, 
			
			\item $\mathcal E_{15}/ 0', a, 0'$, 
			
			\item $\mathcal E_{23}/ 0', a, 0'$, 
			
			\item $\mathcal E_{34}/ 0', a, 0'$, 
			
			\item $\mathcal E_{35}/ 0', a, 0'$, 
			
			\item $\mathcal F_{24}/ a, a, 0$. 
		\end{enumerate}
		
	\end{multicols}

	It is routine to verify that each of the above statements is true, from which it follows that, in each case,
	$\mathcal X \models x \to x \approx x$.  Then, applying Lemma \ref{310516_09}, we get that $\mathcal X \models x' \approx x$.  \\

The notation 
		 
	$$\mathcal X / x_0, y_0, z_0 / x_1, y_1, z_1\ /p \approx q$$
	
	is an abbreviation for the following statement: \\

	``Given an algebra $\mathbf A \in \mathcal X$, with $\mathcal X$ being defined by the identity $(X)$, consider $x_0, y_0, z_0 \in \mathbf A$.   By substituting $x:= x_0, y:= y_0, z:= z_0$, 
	and simplifying (X) using (the appropriate lemmas from) the list ($\ast$), we obtain a proof of    $\mathbf A \models 0' \approx 0$.  Then, 
	 by substituting $x:= x_1, y:= y_1, z:= z_1$ in the identity (X) and simplifying it, using $0'=0$ and the list ($\ast$), we obtain a proof of  $\mathbf A \models p \approx q$.''\\

	Secondly, consider the varieties associated with the following statements:
	\begin{multicols}{2}
		
		\begin{enumerate}
			\item $\mathcal D_{15} / 0, 0, 0' / 0, a, a\ / x \to x \approx x$,

			\item $\mathcal D_{34} / 0, 0, 0 / 0', a, a\ / x \to x \approx x $,

			\item $\mathcal E_{13} / 0, 0, 0' / a', 0, a' / x \to x \approx x$,

			\item $\mathcal E_{14} / 0, 0, 0 / a, a', 0\ / x \to x \approx x$,

			\item $\mathcal E_{24} / 0, 0, 0 / a, 0, a\ / x \to x \approx x$,

			\item $\mathcal E_{45} / 0, 0, 0 / a, 0, a\ / x \to x \approx x$.
			
		\end{enumerate}
	\end{multicols}
	
	It is straightforward to verify that each of the above statements is true.  Hence, it follows that  in each case $\mathcal{X} \models  x \to x \approx x$.  Then, applying Lemma \ref{310516_09}, we get that $\mathcal X \models x' \approx x$.

	Thirdly, consider the varieties associated with the following statements:
	
	\begin{multicols}{2}
		\begin{enumerate}
			\item $\mathcal A_{14} / 0,0,0 / a, 0', 0\  / x'  \approx x, $

			\item $\mathcal F_{14} / 0,0,0 / a, 0, 0\  / x'  \approx x, $

			\item $\mathcal F_{23} / 0',0,0 / a, 0, 0\  / x'  \approx x, $

			\item $\mathcal F_{34} / 0',0,0 / a, a, 0\  / x'  \approx x, $

			\item $\mathcal F_{35} / 0',0,0 / 0', a, 0\  / x'  \approx x, $

			\item $\mathcal F_{45} / 0,0,0 / a, 0, 0\  / x'  \approx x. $

		\end{enumerate}
	\end{multicols}
	
	It is easy to verify that the above statements are true.
	Hence, it follows in each of the above cases that $\mathcal X \models x' \approx x.$

	Lastly, consider the varieties associated with the following statements:
	\begin{enumerate}
		\item $\mathcal A_{15} / 0, 0', 0 / a, 0, 0\  / x'  \approx x \to x,$

		\item $\mathcal A_{34} / 0, 0, 0 / a, 0, 0\  / x'  \approx x \to x,$

		\item $\mathcal F_{15} / 0', 0, 0 / a, a', 0\  / x'  \approx x \to x.$

	\end{enumerate}
	
	It is clear that each of the above statements is true.  Hence in each case $\mathcal{X} \models  
	x'  \approx x \to x $.  Then, applying Lemma \ref{010616_01}, we get that $\mathcal X \models x' \approx x$.

	Thus, the varieties still left to consider are $\mathcal A_{13}$, $\mathcal B_{15}$, $\mathcal C_{12}$, $\mathcal C_{13}$, $\mathcal C_{14}$, $\mathcal C_{34}$, $\mathcal E_{12}$ and $\mathcal F_{12}$.
	
	From Lemma \ref{properties_of_I20_MC} (\ref{310516_01}) we can easily verify that 
	$\mathcal C_{13} = \mathcal B_{12}$,
	$\mathcal C_{14} = \mathcal B_{14}$,
	$\mathcal C_{34} = \mathcal B_{24}$,
	$\mathcal E_{12} = \mathcal D_{13}$,
	and
	$\mathcal F_{12} = \mathcal E_{13}$,
	from which we have, in view of earlier conclusions that $\mathcal C_{13} = \mathcal C_{14} =
	\mathcal C_{34} =
	\mathcal E_{12} = 
	\mathcal E_{14} = 
	\mathcal F_{12}  = \mathcal{SL}.$
	
	Now, we are left with $\mathcal A_{13}$, $\mathcal B_{15}$, and $\mathcal C_{12}$ to consider.	
	But, it can be easily seen, using Lemma \ref{properties_of_I20_MC}, that  $\mathcal C_{12} =\mathcal A_{13}$.  Thus, it only remains to verify that 
	$\mathcal A_{13}  = \mathcal{SL}$ and $\mathcal B_{15} =  \mathcal{SL}$.

	First, we show that $\mathcal A_{13}  = \mathcal{SL}$.  
	Let $\mathbf A \in \mathcal A_{13}$ and let $a \in A$.  Then
	
	$$
	\begin{array}{lcll}
	a & = & 0' \to a & \mbox{by Lemma \ref{general_properties_equiv} (\ref{TXX})} \\
	& = & (0 \to 0) \to a & \mbox{} \\
	& = & (0 \to 0) \to (a' \to a) & \mbox{by Lemma \ref{general_properties_equiv} (\ref{LeftImplicationwithtilde})} \\
	& = & 0 \to (0 \to (a' \to a)) & \mbox{by ($A_{1 3}$)} \\ 
	& = & 0 \to (0 \to a) & \mbox{by Lemma \ref{general_properties_equiv} (\ref{LeftImplicationwithtilde})} \\
	& = & 0 \to a & \mbox{by Lemma \ref{general_properties3} (\ref{031114_04})}. 
	\end{array}
	$$	
	Hence
	\begin{equation} \label{300516_01}
	\mathbf A \models x \approx 0 \to x.
	\end{equation}
	Therefore
	$$
	\begin{array}{lcll}
	a & = & a' \to a & \mbox{by Lemma \ref{general_properties_equiv} (\ref{LeftImplicationwithtilde})} \\
	& = & a' \to (a' \to a) & \mbox{by Lemma \ref{general_properties3} (\ref{031114_04})} \\
	& = & a' \to (a' \to a'') & \mbox{by (\ref{eq_I20})} \\
	& = & a' \to (a' \to (a' \to 0)) & \mbox{} \\
	& = & (a' \to a') \to (a' \to 0) & \mbox{by ($A_{1 3}$)} \\ 
	& = & (a' \to a') \to a'' & \mbox{} \\
	& = & (a' \to 0') \to a'' & \mbox{by Lemma \ref{general_properties3} (\ref{281014_05})} \\
	& = & (0 \to a'') \to a'' & \mbox{by Lemma \ref{general_properties} (\ref{cuasiConmutativeOfImplic})} \\
	& = & (0 \to a) \to a & \mbox{by (\ref{eq_I20})} \\
	& = & a \to a  & \mbox{by (\ref{300516_01})}. 
	\end{array}
	$$
	
	Using Lemma \ref{310516_09} we have that $\mathbf A \in \mathcal{SL}$, hence  $\mathcal A_{13} \subseteq \mathcal{SL}$, implying $\mathcal A_{13}  = \mathcal{SL}$.

	Next, we prove that $\mathcal B_{15} =  \mathcal{SL}$.
	Let $\mathbf A \in \mathcal B_{15}$ and let $a \in A$.  Then
	
	$$
	\begin{array}{lcll}
	a & = & a'' & \mbox{by (\ref{eq_I20})} \\
	& = & (a'' \to a')' & \mbox{by Lemma \ref{general_properties_equiv} (\ref{LeftImplicationwithtilde})} \\
	& = & ((a' \to 0) \to a')' & \mbox{} \\
	& = & ((a' \to 0) \to a') \to 0 & \mbox{} \\
	& = & a' \to (0 \to (a' \to 0)) & \mbox{by ($B_{1 5}$)} \\ 
	& = & a' \to (0 \to a'') & \mbox{} \\
	& = & a' \to (0 \to a) & \mbox{by (\ref{eq_I20})} \\
	& = & 0 \to (a' \to a) & \mbox{by Lemma \ref{properties_of_I20_MC} (\ref{310516_01})} \\
	& = & 0 \to a & \mbox{by Lemma \ref{general_properties_equiv} (\ref{LeftImplicationwithtilde})}. 
	\end{array}
	$$
	Hence
	$$
	\begin{array}{lcll}
	a & = & a'' & \mbox{by (\ref{eq_I20})} \\
	& = &   (a'' \to a')' & \mbox{by Lemma \ref{general_properties_equiv} (\ref{LeftImplicationwithtilde})} \\
	& = &   (a \to a')' &  \mbox{by (\ref{eq_I20})} \\
	& = & (a \to (a \to 0))' & \mbox{} \\
	& = & (a \to (a \to (a \to 0)))' & \mbox{by Lemma \ref{general_properties3} (\ref{031114_04})} \\
	& = & ((a \to a) \to a)'' & \mbox{by ($B_{1 5}$)} \\ 
	& = & (a \to a) \to a & \mbox{by (\ref{eq_I20})} \\
	& = & (0 \to a) \to a & \mbox{by Lemma \ref{general_properties2} (\ref{291014_10})} \\
	& = & a \to a & \mbox{by previous calculus.} 
	\end{array}
	$$
	Thus, $$\mathbf A \models x \approx x \to x.$$
	By Lemma \ref{310516_09} we have that $\mathbf A \models x' \approx x$, implying that 
	 $\mathcal B_{15} \subseteq \mathcal{SL}$.  So, $\mathcal B_{15} =  \mathcal{SL}$, 
	completing the proof.
\end{proof}

The following corollary is immediate from the above theorem.

\begin{Corollary}
	$\mathbf M = \{\mathcal{SL}\}$.
\end{Corollary}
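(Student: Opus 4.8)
The plan is to treat this as a direct set-theoretic consequence of Theorem~\ref{theo_47_SL}, with essentially no computation involved. Recall that $\mathbf{M}$ was introduced as the explicit $47$-element collection of subvarieties $\mathcal{A}_{13}, \mathcal{A}_{14}, \dots, \mathcal{F}_{45}$ of $\mathcal{S}$, and that Theorem~\ref{theo_47_SL} asserts precisely that $\mathcal{X} = \mathcal{SL}$ for every $\mathcal{X} \in \mathbf{M}$. So the task is merely to repackage that conclusion as a statement about the underlying set of varieties.

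Concretely, I would argue by double inclusion. For the inclusion $\mathbf{M} \subseteq \{\mathcal{SL}\}$: given any $\mathcal{X} \in \mathbf{M}$, Theorem~\ref{theo_47_SL} gives $\mathcal{X} = \mathcal{SL}$, hence $\mathcal{X} \in \{\mathcal{SL}\}$. For the reverse inclusion $\{\mathcal{SL}\} \subseteq \mathbf{M}$: the set $\mathbf{M}$ is nonempty by its very definition (for instance $\mathcal{A}_{13} \in \mathbf{M}$), so pick any such member; by the theorem it equals $\mathcal{SL}$, and therefore $\mathcal{SL} \in \mathbf{M}$. Combining the two inclusions yields $\mathbf{M} = \{\mathcal{SL}\}$.

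There is no genuine obstacle here: all the mathematical content is carried by Theorem~\ref{theo_47_SL}, and this corollary simply records that the $47$ syntactically distinct defining identities collapse to a single variety. The only point requiring (trivial) care is the non-emptiness of $\mathbf{M}$, needed for the inclusion $\{\mathcal{SL}\} \subseteq \mathbf{M}$, and this is immediate from the explicit listing of its $47$ elements.
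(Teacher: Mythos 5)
Your proposal is correct and matches the paper, which simply records this corollary as an immediate consequence of Theorem \ref{theo_47_SL}: since every member of $\mathbf{M}$ equals $\mathcal{SL}$ and $\mathbf{M}$ is nonempty, the set collapses to $\{\mathcal{SL}\}$. The double-inclusion phrasing and the explicit remark about nonemptiness are just careful bookkeeping of the same one-line argument.
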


Thus, there are 47 Bol-Moufang subvarieties each of which is equal to $\mathcal{SL}$; and hence they are equal to each other.

In order to be able to compare the remaining (possibly distinct) subvarieties with each other, the following lemmas will be useful.

\begin{Lemma} \label{properties_A23}
	Let $\mathbf A \in \mathcal A_{23}$.  Then $\mathbf A $ satisfies the identities:
	\begin{enumerate}[\rm(a)]
		\item $0 \to x \approx x$, \label{310516_03}
		\item $(x \to x) \to y \approx x \to ((x \to 0') \to y)$, \label{310516_05}
		\item $(x \to x) \to y \approx x \to (x \to y')'$. \label{310516_08}
	\end{enumerate}
\end{Lemma}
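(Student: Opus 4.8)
The plan is to prove part (\ref{310516_03}) first, since it is the heart of the matter, and then to obtain (\ref{310516_05}) and (\ref{310516_08}) as short consequences. Recall that $(A_{23})$ is the identity $x \to ((x \to y) \to z) \approx (x \to x) \to (y \to z)$, and that $\mathbf A \in \mathcal A_{23} \subseteq \mathcal S = \mathcal I_{2,0} \cap \mathcal{MC}$. For (\ref{310516_03}) I would substitute $x := 0$ and $z := 0$ in $(A_{23})$. The right-hand side becomes $(0 \to 0) \to (y \to 0) = 0' \to y'$, which equals $y'$ by Lemma \ref{general_properties_equiv} (\ref{TXX}); the left-hand side becomes $0 \to ((0 \to y) \to 0) = 0 \to (0 \to y)'$, which equals $0 \to y'$ by Lemma \ref{general_properties2} (\ref{031114_07}). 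Hence $\mathbf A \models 0 \to y' \approx y'$. To upgrade this to the full identity $0 \to x \approx x$, I would exploit that in $\mathcal I_{2,0}$ every element is a double prime: instantiating $y := x'$ gives $0 \to x'' \approx x''$, and then $x'' \approx x$ by (\ref{eq_I20}) finishes (\ref{310516_03}). The main obstacle is exactly this: spotting that the double substitution $x := 0$, $z := 0$ collapses $(A_{23})$ to something usable, and then the ``every element equals $a''$'' trick that turns the restricted identity into the unrestricted one.

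Once (\ref{310516_03}) is in hand, two auxiliary facts follow that make (\ref{310516_05}) and (\ref{310516_08}) essentially mechanical. First, combining Lemma \ref{general_properties} (\ref{cuasiConmutativeOfImplic}), i.e. $0 \to x' \approx x \to 0'$, with (\ref{310516_03}) (applied to $x'$, in the guise $0 \to x' \approx x'$) gives $\mathbf A \models x \to 0' \approx x'$. Second, since $\mathbf A \in \mathcal I_{2,0}$ and now $\mathbf A \models 0 \to x \approx x$, Lemma \ref{lemma_070616_01} applies and yields $(x \to y)' \approx x' \to y'$; in particular, replacing $y$ by $y'$ and using (\ref{eq_I20}), $(x \to y')' \approx x' \to y$.

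For (\ref{310516_05}), I would substitute $y := 0$ in $(A_{23})$: the left-hand side becomes $x \to ((x \to 0) \to z) = x \to (x' \to z)$, while the right-hand side becomes $(x \to x) \to (0 \to z) \approx (x \to x) \to z$ by (\ref{310516_03}). Thus $\mathbf A \models (x \to x) \to z \approx x \to (x' \to z)$, and rewriting $x'$ as $x \to 0'$ by the first auxiliary fact gives exactly (\ref{310516_05}). For (\ref{310516_08}), I would take the identity $(x \to x) \to z \approx x \to (x' \to z)$ just obtained and apply the second auxiliary fact $x' \to z \approx (x \to z')'$ inside the inner term, obtaining $(x \to x) \to z \approx x \to (x \to z')'$, which is (\ref{310516_08}) after renaming $z$ to $y$. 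Everything after (\ref{310516_03}) is routine bookkeeping with the lemmas of Section 2; one should only make sure there is no circularity, and indeed (\ref{310516_03}) is proved on its own and then freely used in (\ref{310516_05}) and (\ref{310516_08}).
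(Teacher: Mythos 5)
Your proposal is correct, and its core, part (a), is essentially the paper's own argument: the same substitution $x:=0$, $z:=0$ in $(A_{23})$, simplified with Lemma \ref{general_properties_equiv} (\ref{TXX}), Lemma \ref{general_properties3} (\ref{031114_07}) and (\ref{eq_I20}); the paper merely instantiates $y:=a'$ from the outset rather than first proving $0 \to y' \approx y'$ and then setting $y:=x'$. For (b) and (c) you take slightly different, but equally valid, shortcuts. The paper gets (b) in one line by substituting $y:=0'$ into $(A_{23})$ and using $0' \to z \approx z$; you instead substitute $y:=0$, simplify the right side with (a), and then rewrite $x'$ as $x \to 0'$ via Lemma \ref{general_properties} (\ref{cuasiConmutativeOfImplic}) combined with (a) --- a couple of extra steps, but sound. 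The paper proves (c) directly from $(A_{23})$ by writing $(x \to x) \to y \approx (x \to x) \to (y' \to 0)$ and applying the identity with middle variable $y'$ and last variable $0$; you derive it from your intermediate identity $(x \to x) \to z \approx x \to (x' \to z)$ together with Lemma \ref{lemma_070616_01}, whose hypothesis $0 \to x \approx x$ is exactly your part (a), so there is no circularity. Both routes work; the paper's versions of (b) and (c) are marginally more economical, while yours make explicit the dependence on (a) and on the multiplicativity of $'$.
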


\begin{proof}
	\begin{enumerate}[(a)]
		\item Let $a \in A$. Then
		$$
		\begin{array}{lcll}
		a	& = & 0' \to a & \mbox{by Lemma \ref{general_properties_equiv} (\ref{TXX})} \\
		& = & (0 \to 0) \to a & \mbox{} \\
		& = & (0 \to 0) \to a'' & \mbox{by (\ref{eq_I20})} \\
		& = & (0 \to 0) \to (a' \to 0) & \mbox{} \\
		& = & 0 \to (0 \to a')' & \mbox{by ($A_{2 3}$)} \\ 
		& = & 0 \to a'' & \mbox{by Lemma \ref{general_properties3} (\ref{031114_07})} \\
		& = & 0 \to a & \mbox{by (\ref{eq_I20})}. 
		\end{array}
		$$
		
		\item Let $a,b \in A$. Then, using ($A_{2 3}$) and Lemma \ref{general_properties_equiv} (\ref{TXX}), we have that $a \to ((a \to 0') \to b) = (a \to a) \to (0' \to b) = (a \to a) \to b$.
		
		\item Let $a,b \in A$.  By ($A_{2 3}$) we have that $(a \to a) \to b = (a \to a) \to b'' = (a \to a) \to (b' \to 0) = a \to (a \to b')'$.
	\end{enumerate}
\end{proof}

\begin{Lemma}  \label{properties_A25} \label{properties_C25} \label{properties_D25}  \label{properties_E25}
We have	
	
	\begin{enumerate}[\rm(1)]
		\item	Let $\mathbf A \in \mathcal A_{25} \cup \mathcal C_{25} \cup \mathcal D_{25} \cup \mathcal E_{25}$.  Then $\mathbf A $ satisfies the identities
		\begin{enumerate}[\rm(a)]
			\item $0 \to x \approx x$,  \label{060616_01}  \label{070616_03}  \label{070616_05}  \label{070616_07} 
			\item $(x \to y)' \approx x' \to y'$. \label{060616_02}  \label{070616_04}  \label{070616_06}  \label{070616_08}
		\end{enumerate}	
		\item Let $\mathbf A \in \mathcal A_{25}$.  Then $\mathbf A \models x \to (y \to (y' \to z)) \approx y \to ((y \to x) \to z)$. \label{060616_03}
	\end{enumerate}
\end{Lemma}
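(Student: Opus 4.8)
The plan is to handle the four varieties $\mathcal A_{25}$, $\mathcal C_{25}$, $\mathcal D_{25}$, $\mathcal E_{25}$ uniformly for part (1), since in each case the left-hand side is bracketed in some way and the right-hand side is bracketed according to pattern 5, i.e. $((o \to o) \to o) \to o$. First I would establish $0 \to x \approx x$ for each of them. The key observation is that in $\mathcal I_{2,0}$ the identity $0 \to x \approx x$ is equivalent to $x'' \approx x$ being ``activated'' on the left, so it suffices (by Lemma \ref{general_properties_equiv}) to derive $0' \to x \approx x$, or equivalently to produce a proof that $0 \to a = a$ for arbitrary $a$. In each case I would substitute cleverly chosen values (typically involving $0$, $0'$, and $a$ or $a'$) into the defining identity $(X_{25})$ so that, after simplifying both sides with the tools in list $(\ast)$ — especially Lemma \ref{general_properties2}(\ref{291014_10}), Lemma \ref{general_properties3}(\ref{291014_02}), Lemma \ref{general_properties3}(\ref{031114_04}), and Lemma \ref{properties_of_I20_MC}(\ref{310516_01}) — the right-hand side collapses to $0 \to a$ (the outermost application absorbs the triple-nested left term) while the left-hand side reduces to $a$ itself. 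This mirrors exactly the style already used in Theorem \ref{theo_47_SL}, so I expect the substitutions to be short.

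Once $0 \to x \approx x$ is in hand, part (1)(b), namely $(x \to y)' \approx x' \to y'$, is \emph{immediate}: it is precisely the conclusion of Lemma \ref{lemma_070616_01}, whose hypothesis is exactly $0 \to x \approx x$ (together with $\mathbf A \in \mathcal I_{2,0}$, which holds since $\mathcal S \subseteq \mathcal I_{2,0}$). So (1)(b) requires no separate computation beyond invoking that lemma.

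For part (2), which concerns only $\mathcal A_{25}$, the defining identity $(A_{25})$ is $x \to ((x \to y) \to z) \approx ((x \to x) \to y) \to z$. Using part (1)(a), i.e. $0 \to x \approx x$, I would rewrite the right-hand side: apply Lemma \ref{040716_03} (whose hypothesis $0 \to (x \to x) \approx x \to x$ now follows trivially from $0 \to x \approx x$) to get $((x \to x) \to y) \to z \approx (x \to x) \to (y \to z)$. Then the target identity $x \to (y \to (y' \to z)) \approx y \to ((y \to x) \to z)$ should follow by a chain of rewrites: on the right, $(A_{25})$ with $x := y$ gives $y \to ((y \to x) \to z) = ((y \to y) \to x) \to z = (y \to y) \to (x \to z)$ (using Lemma \ref{040716_03} again); on the left, push the $y$'s together using Lemma \ref{properties_of_I20_MC}(\ref{310516_01}) (left-commutativity of nested implications) and Lemma \ref{general_properties3}(\ref{140715_20}) to turn $y \to (y' \to z)$-type expressions into $(y \to \cdot)' \to z$ form, matching $(y \to y) \to (x \to z)$ after reassociating with Lemma \ref{040716_03}. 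The main obstacle I anticipate is precisely this last step in part (2): lining up the nested term $y \to (y' \to z)$ against $(y \to y) \to (x \to z)$ will require a careful interplay of (\ref{140715_20}) (which converts $(u \to v')' \to w$ to $u \to (v \to w)$) and the reassociation lemma \ref{040716_03}, and getting the primes and the order of $x$, $y$, $z$ to match may take two or three intermediate identities rather than one clean substitution. Parts (1)(a) and (1)(b) I expect to be routine.
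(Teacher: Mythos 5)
Your strategy for part (1) is essentially the paper's: (1)(a) is proved there exactly as you propose, by a short case-by-case computation for each of $\mathcal A_{25}$, $\mathcal C_{25}$, $\mathcal D_{25}$, $\mathcal E_{25}$ in which one plugs $0$'s and $a$ (or $a'$) into $(X_{25})$ and simplifies with the standing lemmas, and (1)(b) is, as you say, just an invocation of Lemma \ref{lemma_070616_01}. One caveat: your ``key observation'' that $0 \to x \approx x$ is equivalent to $x'' \approx x$, so that it would suffice to derive $0' \to x \approx x$, is false --- Lemma \ref{general_properties_equiv} equates $x'' \approx x$ with $0' \to x \approx x$ (note the prime), which already holds throughout $\mathcal S$, whereas $0 \to x \approx x$ is strictly stronger (it fails in $\mathbf{2_b} \in \mathcal S$). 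This slip does not damage the plan, since operationally you aim directly at $0 \to a = a$ via $(X_{25})$, which is what the paper does. For part (2) your route differs in detail from the paper's but does close: on the right, $(A_{25})$ together with Lemma \ref{040716_03} (whose hypothesis follows from (1)(a)) gives $y \to ((y \to x) \to z) = (y \to y) \to (x \to z)$; on the left, Lemma \ref{general_properties3}~(\ref{140715_20}) and (\ref{eq_I20}) give $y \to (y' \to z) = (y \to y)' \to z$, and Lemma \ref{properties_of_I20_MC}~(\ref{310516_01}) then yields $(y \to y)' \to (x \to z)$; the mismatched prime you anticipated disappears because $(y \to y)' = y' \to y' = y \to y$ by (1)(b) and Lemma \ref{properties_of_I20_MC}~(\ref{021017_01}), so the two-or-three extra identities you budgeted for are exactly these. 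The paper instead applies $(A_{25})$ once (reading $y'$ as $y \to 0$) to rewrite $y \to (y' \to z)$ as $(y \to y)' \to z$ and then finishes with a chain of the commutation/contraposition identities of Lemma \ref{properties_of_I20_MC} and (1)(b), never using Lemma \ref{040716_03}; the two routes are of comparable length, yours trading the paper's longer commutation chain for the reassociation lemma plus the observation $(y \to y)' \approx y \to y$.
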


\begin{proof}
	\begin{enumerate}[\rm(1)]
		\item
		\begin{enumerate}[(a)]
			\item 
			Let $\mathbf A \in \mathcal A_{25}$ and $a \in A$. 
			Then
			$$
			\begin{array}{lcll}
			a	& = & a'' & \mbox{by (\ref{eq_I20})} \\
			& = & a' \to 0 & \mbox{} \\
			& = & (0' \to a') \to 0 & \mbox{by Lemma \ref{general_properties_equiv} (\ref{TXX})} \\
			& = & ((0 \to 0) \to a') \to 0 & \mbox{} \\
			& = & 0 \to ((0 \to a') \to 0) & \mbox{by ($A_{2 5}$)} \\ 
			& = & 0 \to a'' & \mbox{by Lemma \ref{general_properties3} (\ref{031114_07})} \\
			& = & 0 \to a & \mbox{by (\ref{eq_I20})}. 
			\end{array}
			$$	
			
			Let $\mathbf A \in \mathcal C_{25}$ and $a \in A$. 
			Then
			$$
			\begin{array}{lcll}
			0 \to a	& = & 0 \to (0' \to a) & \mbox{by Lemma \ref{general_properties_equiv} (\ref{TXX})} \\
			& = & 0 \to ((0' \to 0') \to a) & \mbox{by Lemma \ref{general_properties_equiv} (\ref{TXX})} \\
			& = & ((0 \to 0') \to 0') \to a & \mbox{by ($C_{2 5}$)} \\ 
			& = &  ((0'' \to 0') \to 0') \to a &  \mbox{by $0 \approx 0''$} \\ 
			& = & (0' \to 0') \to a & \mbox{by Lemma \ref{general_properties_equiv} (\ref{LeftImplicationwithtilde})} \\
			& = & 0' \to a & \mbox{by Lemma \ref{general_properties_equiv} (\ref{TXX})} \\
			& = & a & \mbox{by Lemma \ref{general_properties_equiv} (\ref{TXX}).} 
			\end{array}
			$$

			Let $\mathbf A \in \mathcal D_{25}$ and $a \in A$. 
			Then
			$$
			\begin{array}{lcll}
			0 \to a & = & 0 \to a'' & \mbox{by (\ref{eq_I20})} \\
			& = & 0 \to (0 \to a')' & \mbox{by Lemma \ref{general_properties3} (\ref{031114_07})} \\
			& = & 0 \to ((0 \to a') \to 0) & \mbox{} \\
			& = & ((0 \to 0) \to a') \to 0 & \mbox{by ($D_{2 5}$)} \\ 
			& = & (0' \to a') \to 0 & \mbox{} \\
			& = & a' \to 0 & \mbox{by Lemma \ref{general_properties_equiv} (\ref{TXX})} \\
			& = & a & \mbox{by (\ref{eq_I20})}. 
			\end{array}
			$$

 If $\mathbf A \in \mathcal E_{25}$, then the proof is analogous to that of the preceding case.  Just replace ($D_{2 5}$) with ($E_{2 5}$).

			\item By (\ref{060616_01}), $\mathbf A \models x \approx 0 \to x$. Then by Lemma \ref{lemma_070616_01}, we have
			$$\mathbf A \models (x \to y)' \approx x' \to y'.$$
		\end{enumerate}
		\item Let $a,b,c \in A$.
		Then
		$$
		\begin{array}{lcll}
		a \to (b \to (b' \to c))	& = & a \to ((b \to b)' \to c) & \mbox{by ($A_{2 5}$)} \\ 
		& = & a \to (c' \to (b \to b)) & \mbox{by Lemma \ref{properties_of_I20_MC} (\ref{310516_02})} \\
		& = & a \to (b \to (c' \to b)) & \mbox{by Lemma \ref{properties_of_I20_MC} (\ref{310516_01})} \\
		& = & b \to (a \to (c' \to b)) & \mbox{by Lemma \ref{properties_of_I20_MC} (\ref{310516_01})} \\
		& = & b \to (c' \to (a \to b)) & \mbox{by Lemma \ref{properties_of_I20_MC} (\ref{310516_01})} \\
		& = & b \to ((a \to b)' \to c) & \mbox{by Lemma \ref{properties_of_I20_MC} (\ref{310516_02})} \\
		& = & b \to ((a' \to b') \to c) & \mbox{by (\ref{060616_02})} \\
		& = & b \to ((b \to a) \to c) & \mbox{ by Lemma \ref{properties_of_I20_MC} (\ref{021017_01})}. 
		\end{array}
		$$
	\end{enumerate}
	This completes the proof.
\end{proof}

\begin{Lemma} \label{properties_A12}
	Let $\mathbf A \in \mathcal A_{12} \cup \mathcal D_{12} \cup \mathcal D_{35}$ then $\mathbf A $ satisfies the identities
	\begin{enumerate}[\rm(a)]
		\item $0 \to x' \approx x \to x \approx 0 \to x$,  \label{140616_01} \label{140616_02} \label{140616_03}
		\item $0 \to (x \to y) \approx 0 \to (y \to x)$,  \label{140616_04}
		\item $0 \to (x \to (y \to z)) \approx 0 \to ((x \to y) \to z)$. \label{230616_03}
	\end{enumerate}
\end{Lemma}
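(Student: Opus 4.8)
The plan is to prove (a) for each of the three varieties separately, and then to derive (b) and (c) from (a) by a single argument valid throughout $\mathcal S$. A useful first observation is that (a) is equivalent to the single identity $0 \to x \approx x \to x$: the second equality in (a) is exactly $x \to x \approx 0 \to x$, while conversely, substituting $x'$ for $x$ in $0 \to x \approx x \to x$ gives $0 \to x' \approx x' \to x'$, and $x' \to x' \approx x \to x$ by Lemma~\ref{properties_of_I20_MC}~(\ref{021017_01}) with $y := x$. So it suffices to prove $\mathbf A \models 0 \to x \approx x \to x$ for $\mathbf A$ in each of $\mathcal A_{12}$, $\mathcal D_{12}$ and $\mathcal D_{35}$.

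For $\mathcal D_{35}$, substitute $0$ for $y$ and $0$ for $z$ in $(D_{35})$: the right-hand side collapses to $(x' \to 0) \to x = x'' \to x = x \to x$, and the left-hand side $x' \to (0 \to x)$ reduces to $0 \to x$ using Lemma~\ref{properties_of_I20_MC}~(\ref{310516_01}) and Lemma~\ref{general_properties_equiv}~(\ref{LeftImplicationwithtilde}). For $\mathcal A_{12}$, substitute $0$ for $z$ in $(A_{12})$ and simplify using Lemma~\ref{general_properties3}~(\ref{031114_04}), obtaining $x \to y' \approx x \to (x \to y)'$; applying Lemma~\ref{properties_of_I20_MC}~(\ref{021017_02}) to both sides gives $y \to x' \approx (x \to y) \to x'$, and the instance $y := 0$ of this yields $0 \to x' \approx x' \to x'$, whence $0 \to x \approx x \to x$ by replacing $x$ with $x'$.

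The case of $\mathcal D_{12}$ is the main obstacle, since the obvious substitutions only produce the weaker identity $0 \to (x \to x) \approx x \to x$, and a bootstrap is needed. I would proceed in stages. From $(D_{12})$ with $z := 0$, together with Lemma~\ref{properties_of_I20_MC}~(\ref{310516_01}), one gets $y \to (0 \to (x \to x)) \approx y' \to (x \to x)$; the instance $y := 0$, simplified via Lemma~\ref{general_properties3}~(\ref{031114_04}) and Lemma~\ref{general_properties_equiv}~(\ref{TXX}), gives $0 \to (x \to x) \approx x \to x$, and substituting this back gives $y \to (x \to x) \approx y' \to (x \to x)$. Taking $x := 0$ in the last identity and rewriting by the two items of Lemma~\ref{general_properties} turns it into $0 \to x' \approx 0 \to x$. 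Two more instances of $(D_{12})$ now close the case: substituting $0$ for $y$ and $x$ for $z$ (and using the facts just obtained together with Lemma~\ref{properties_of_I20_MC}~(\ref{310516_01})) gives $x \to x \approx (0 \to x) \to (x \to x)$, while substituting $x'$ for $x$, $0$ for $y$ and $x$ for $z$ --- using $x \to x' \approx x'$ (from Lemma~\ref{general_properties_equiv}~(\ref{reflexivity})), the identity $0 \to x' \approx 0 \to x$ above, and Lemma~\ref{properties_of_I20_MC}~(\ref{310516_01}),~(\ref{021017_01}) with Lemma~\ref{general_properties_equiv}~(\ref{LeftImplicationwithtilde}) --- gives $0 \to x \approx (0 \to x) \to (x \to x)$. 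Comparing the two right-hand sides yields $0 \to x \approx x \to x$.

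Once (a) is established --- so that $0 \to z' \approx 0 \to z$ for every term $z$ --- parts (b) and (c) follow uniformly. For (b): $0 \to (x \to y) \approx 0 \to (x \to y)' \approx 0 \to (x' \to y') \approx 0 \to (y \to x)$, where the middle step is Lemma~\ref{general_properties3}~(\ref{191114_05}) applied with $y$ replaced by $y'$ and the last is Lemma~\ref{properties_of_I20_MC}~(\ref{021017_01}). For (c): combining Lemma~\ref{properties_of_I20_MC}~(\ref{310516_01}) in the form $u \to (v \to w) \approx v \to (u \to w)$, Lemma~\ref{general_properties3}~(\ref{071114_04}) in the form $0 \to (u \to v) \approx u \to (0 \to v)$, and (b) in the form $0 \to (u \to v) \approx 0 \to (v \to u)$, one rewrites both $0 \to (x \to (y \to z))$ and $0 \to ((x \to y) \to z)$ to the common value $0 \to (y \to (z \to x))$, completing the proof.
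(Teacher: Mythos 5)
Your proof is correct and follows essentially the same strategy as the paper: reduce (a) to the single identity $0 \to x \approx x \to x$, establish it separately for $\mathcal A_{12}$, $\mathcal D_{12}$ and $\mathcal D_{35}$ by specializing the defining identity at $0$, and then derive (b) and (c) uniformly from (a). The individual equational chains differ in detail --- the paper's $\mathcal D_{12}$ case is a single shorter derivation, and it proves (b) by distributing $0 \to$ over $\to$ via Lemma \ref{general_properties3} (\ref{071114_04}) and (\ref{311014_06}) rather than via $0 \to z' \approx 0 \to z$ and Lemma \ref{general_properties3} (\ref{191114_05}) --- but every step of yours checks out against the cited lemmas.
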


\begin{proof} Let $a, b, c \in A$.
	\begin{enumerate}[(a)]
		\item 
		Suppose $\mathbf A \in \mathcal A_{12}$.  Then 
		
		$$
		\begin{array}{lcll}
		0 \to a'	& = & a \to 0' & \mbox{by Lemma \ref{general_properties} (\ref{cuasiConmutativeOfImplic})} \\
		& = & a \to (a \to 0') & \mbox{by Lemma \ref{general_properties2} (\ref{031114_04})} \\
		& = & a \to (a \to (0 \to 0)) & \mbox{} \\
		& = & a \to ((a \to 0) \to 0) & \mbox{by ($A_{1 2}$)} \\ 
		& = & a \to a'' & \mbox{} \\
		& = & a \to a & \mbox{by (\ref{eq_I20})}. 
		\end{array}
		$$
		Thus, we have 
		\begin{equation} \label{230616_01}
		\mathbf A \models 0 \to x' \approx x \to x.	
		\end{equation}
		Hence,

		$$
		\begin{array}{lcll}
		0 \to a & = & 0 \to a'' & \mbox{by (\ref{eq_I20})} \\
		& = & a' \to a' & \mbox{by (\ref{230616_01})} \\
		& = & a \to a & \mbox{ by Lemma \ref{properties_of_I20_MC} (\ref{021017_01})}. 
		\end{array}
		$$
		Consequently, $$\mathbf A \models 0 \to x \approx x \to x.$$

		Next, assume that $\mathbf A \in \mathcal D_{12}$. Then
		
		$$
		\begin{array}{lcll}
		a \to a & = & a \to (0' \to a) & \mbox{by Lemma \ref{general_properties_equiv} (\ref{TXX})} \\
		& = & a \to ((0 \to 0) \to a) & \mbox{} \\
		& = & a \to (0 \to (0 \to a)) & \mbox{by ($D_{1 2}$)} \\ 
		& = & 0 \to (0 \to (a \to a)) & \mbox{by Lemma \ref{properties_of_I20_MC} (\ref{310516_01}) twice} \\
		& = & 0 \to (a \to a) & \mbox{by Lemma \ref{general_properties2} (\ref{031114_04})} \\
		& = & 0 \to (a'' \to a'') & \mbox{by (\ref{eq_I20})} \\
		& = & 0 \to (a' \to a')' & \mbox{by Lemma \ref{general_properties2} (\ref{191114_05})} \\
		& = & 0 \to ((a' \to a') \to 0) & \mbox{} \\
		& = & 0 \to (a' \to (a' \to 0)) & \mbox{by ($D_{1 2}$)} \\ 
		& = & 0 \to (a' \to a) & \mbox{by (\ref{eq_I20})} \\
		& = & 0 \to a & \mbox{by Lemma \ref{general_properties_equiv} (\ref{LeftImplicationwithtilde})}. 
		\end{array}
		$$
		Hence 
		\begin{equation} \label{230616_02}
		\mathbf A \models 0 \to x \approx x \to x.	
		\end{equation}
		Now,
		$$
		\begin{array}{lcll}
		0 \to a' & = & a' \to a' & \mbox{by (\ref{230616_02})} \\
		& = & a \to a & \mbox{ by Lemma \ref{properties_of_I20_MC} (\ref{021017_01})}. 
		\end{array}
		$$
		Consequently, $$\mathbf A \models 0 \to x' \approx x \to x.$$

		Finally, assume that $\mathbf A \in \mathcal D_{35}$.  Then 
		$$
		\begin{array}{lcll}
		0 \to a & = & a' \to (0 \to a) & \mbox{by Lemma \ref{general_properties2} (\ref{281114_01})} \\
		& = & (a \to 0) \to (0 \to a) & \mbox{} \\
		& = & ((a \to 0) \to 0) \to a & \mbox{by ($D_{3 5}$)} \\ 
		& = & a'' \to a & \mbox{} \\
		& = & a \to a & \mbox{by (\ref{eq_I20})}. 
		\end{array}
		$$
		Hence 
		\begin{equation} \label{230616_05}
		\mathbf A \models 0 \to x \approx x \to x.	
		\end{equation}
		$$
		\begin{array}{lcll}
		0 \to a' & = & a' \to a' & \mbox{by (\ref{230616_05})} \\
		& = & a \to a & \mbox{ by Lemma \ref{properties_of_I20_MC} (\ref{021017_01})}. 
		\end{array}
		$$
		Consequently, $$\mathbf A \models 0 \to x' \approx x \to x.$$

		\item Observe that
		
		$$
		\begin{array}{lcll}
		0 \to (a \to b) & = & 0 \to (b' \to a') & \mbox{ by Lemma \ref{properties_of_I20_MC} (\ref{021017_01})} \\
		& = & (0 \to b') \to (0 \to a') & \mbox{by Lemma \ref{general_properties2} (\ref{071114_04}) and (\ref{311014_06})} \\
		& = & (0 \to b) \to (0 \to a) & \mbox{by (\ref{140616_03})} \\
		& = & 0 \to (b \to a) & \mbox{by Lemma \ref{general_properties2} (\ref{071114_04}) and (\ref{311014_06})}. 
		\end{array}
		$$

		\item
		
		$$
		\begin{array}{lcll}
		0 \to (b \to (c \to a)) & = & (0 \to b) \to (0 \to (c \to a)) &  \\
		&  & \mbox{by Lemma \ref{general_properties2} (\ref{071114_04}) and (\ref{311014_06})} &  \\
		& = & (0 \to b) \to (0 \to (a \to c)) & \mbox{by (\ref{140616_04})} \\
		& = & 0 \to (b \to (a \to c)) &  \\
		&  & \mbox{by Lemma \ref{general_properties2} (\ref{071114_04}) and (\ref{311014_06})} &  \\
		& = & 0 \to (a \to (b \to c)) & \mbox{by Lemma \ref{properties_of_I20_MC} (\ref{310516_01})} \\
		& = & 0 \to ((b \to c) \to a) & \mbox{by  (\ref{140616_04})}. 
		\end{array}
		$$	
	\end{enumerate}
	This completes the proof.
\end{proof}

\vspace{1cm}
\section{Distinct Varieties of Symmetric $\mathcal{I}$-zroupoids of Bol-Moufang type}

Recall that 47 of the 60 subvarieties of $\mathcal{S}$ were shown to be equal to the variety $\mathcal{SL}$.  In this section we will investigate the relationships among the remaining 13 varieties: $\mathcal A_{12}, \mathcal A_{23},  \mathcal A_{25},  \mathcal A_{35},  \mathcal B_{13}, \mathcal B_{25}, \mathcal C_{25},  
\mathcal D_{12},$
$ \mathcal D_{25},  \mathcal D_{35}, \mathcal E_{25},
\mathcal F_{13}$ and
$\mathcal F_{25} $.    
We still need to determine which of these are distinct from each other.  The following theorem throws more light on this issue.  

\begin{Theorem} \label{theorem_040716_05}
	We have
	\begin{enumerate}[\rm(a)]
		\item $\mathcal A_{23} = \mathcal A_{25} = \mathcal C_{25} = \mathcal D_{25} = \mathcal E_{25}$,
		\item $\mathcal A_{12} = \mathcal B_{13}  =  \mathcal D_{12} = \mathcal D_{35} = \mathcal F_{13}$,
		\item $\mathcal A_{35} = \mathcal F_{25}$, \label{040716_06}
		 \item $\mathcal B_{25} \ = \  \mathcal{S}$.
	\end{enumerate}
\end{Theorem}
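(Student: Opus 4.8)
The plan is to handle the four parts with one common device: modulo the identities already isolated in Lemmas~\ref{properties_A23}, \ref{properties_A25} and \ref{properties_A12} (together with the ambient identities of Lemmas~\ref{properties_of_I20_MC} and \ref{general_properties3}), each defining Bol--Moufang identity is reduced to a single ``normal form,'' and the asserted coincidences then follow by mutual inclusion. Part~(d) is different in kind: there the claim is that $(B_{25})$ is already a law of \emph{all} of $\mathcal S$, so no extra defining identity is available and the work is a direct computation.

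For part~(a), I would first record that every algebra in $\mathcal A_{23}\cup\mathcal A_{25}\cup\mathcal C_{25}\cup\mathcal D_{25}\cup\mathcal E_{25}$ satisfies $0\to x\approx x$ and hence the De Morgan law $(x\to y)'\approx x'\to y'$ (Lemma~\ref{properties_A25}, with Lemma~\ref{lemma_070616_01} supplying De Morgan for $\mathcal A_{23}$ from Lemma~\ref{properties_A23}). Working in $\mathcal S$ under these two laws, I would pass to the derived operation $x\ast y:=x'\to y$, which is commutative by Lemma~\ref{properties_of_I20_MC}(\ref{310516_02}), has $0$ as a unit (note $0\to x\approx x$ forces $0'\approx 0$), and satisfies $x\to y\approx x'\ast y$ and $(x\ast y)'\approx x'\ast y'$. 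A short rewriting then turns each of the five defining identities into an instance of the associativity of $\ast$; conversely, once $\ast$ is associative, commutative and compatible with $'$, both sides of every such identity collapse to the same $\ast$-word. Formally I would run a cycle of inclusions $\mathcal A_{23}\subseteq\mathcal A_{25}\subseteq\mathcal C_{25}\subseteq\mathcal D_{25}\subseteq\mathcal E_{25}\subseteq\mathcal A_{23}$, using the auxiliary rearrangement of Lemma~\ref{properties_A25}(\ref{060616_03}) as the pivot that converts the $\mathcal A_{25}$-law into the remaining three ``$25$'' laws.

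Part~(b) follows the same template but around the other distinguished element $x\to x$: by Lemma~\ref{properties_A12} the members $\mathcal A_{12},\mathcal D_{12},\mathcal D_{35}$ satisfy the core $0\to x'\approx x\to x\approx 0\to x$ together with the $0\to$-commutativity and $0\to$-associativity laws recorded there. I would first extend Lemma~\ref{properties_A12} to $\mathcal B_{13}$ and $\mathcal F_{13}$ by the analogous derivations, and then show that, modulo this core, each of the five defining identities is interderivable with the $\mathcal A_{12}$-law, again via a cycle of inclusions. For part~(c), I note that Lemma~\ref{040716_03} already gives that the hypothesis $0\to(x\to x)\approx x\to x$ implies the $\mathcal A_{35}$-identity; I would prove the converse, and establish an analogue of Lemma~\ref{040716_03} for $(F_{25})$, so that both $\mathcal A_{35}$ and $\mathcal F_{25}$ coincide with the subvariety of $\mathcal S$ defined by $0\to(x\to x)\approx x\to x$, and hence with each other.

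For part~(d) I would verify $(B_{25})$ directly in $\mathcal S$. Applying Lemma~\ref{properties_of_I20_MC}(\ref{310516_01}) and then Lemma~\ref{general_properties3}(\ref{140715_20}) to the left-hand side rewrites $x\to((y\to x)\to z)$ as $((y\to x)\to x')'\to z$, while the right-hand side is $((x\to y)\to x)\to z$; since $z$ is free, it suffices to establish the $z$-free identity $((y\to x)\to x')'\approx(x\to y)\to x$, which I would then settle using Lemma~\ref{properties_of_I20_MC}(\ref{310516_02}), (\ref{021017_02}) and meet-commutativity. As $\mathcal B_{25}\subseteq\mathcal S$ trivially, this gives $\mathcal B_{25}=\mathcal S$. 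I expect the main obstacle to lie not in part~(d) but in the cross-bracketing collapses of parts~(a)--(c): folding $\mathcal A_{23}$ into the ``$25$'' family, folding $\mathcal B_{13}$ and $\mathcal F_{13}$ into the $\mathcal A_{12}$ family, and matching $\mathcal A_{35}$ with $\mathcal F_{25}$ are the steps that require genuinely different substitutions and the auxiliary Lemmas~\ref{040716_03} and~\ref{properties_A25}(\ref{060616_03}), whereas the within-family equivalences are comparatively routine bookkeeping.
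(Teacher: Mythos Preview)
Your proposal is correct and follows essentially the same architecture as the paper: a cycle of inclusions for~(a) and~(b), and direct verifications for~(c) and~(d). A few points of comparison are worth noting.

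For~(b), the paper is more economical: it observes that Lemma~\ref{properties_of_I20_MC}(\ref{310516_01}) alone already gives $\mathcal A_{12}=\mathcal B_{13}$ and $\mathcal D_{12}=\mathcal F_{13}$ (after a variable renaming), so no extension of Lemma~\ref{properties_A12} is needed; the cycle then runs only through $\mathcal A_{12}\subseteq\mathcal D_{12}\subseteq\mathcal D_{35}\subseteq\mathcal A_{12}$. Your plan works too, but duplicates effort.

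Your $\ast$--device for~(a) is a genuine conceptual addition, not present in the paper. One refinement: associativity of $x\ast y:=x'\to y$ is \emph{already} a law of all of $\mathcal I_{2,0}$ (it is exactly Lemma~\ref{general_properties3}(\ref{140715_20}) with $y$ replaced by $y'$), so it is not associativity that distinguishes these five subvarieties. The operative extra ingredient is the compatibility $(a\ast b)'\approx a'\ast b'$, which is the De~Morgan law, which by Lemma~\ref{lemma_070616_01} is exactly $0\to x\approx x$. With that correction your argument in fact yields the sharper statement that each of the five varieties in~(a) coincides with the subvariety of $\mathcal S$ axiomatized by $0\to x\approx x$; the paper instead establishes the five inclusions by separate equational chains.

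For~(c), the paper avoids your ``converse'' step: rather than showing $(A_{35})\Rightarrow 0\to(x\to x)\approx x\to x$, it proves $\mathcal A_{35}\subseteq\mathcal F_{25}$ by a direct rewriting, and only the reverse inclusion goes through Lemma~\ref{040716_03}. Your route is also viable (substituting $y=z=0$ in $(A_{35})$ and using Lemma~\ref{general_properties2}(\ref{191114_05}) gives the needed intermediate), but it is slightly longer. For~(d), your reduction to the $z$--free identity $((y\to x)\to x')'\approx(x\to y)\to x$ is a clean alternative to the paper's two applications of~(I).
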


\begin{proof}
	\begin{enumerate}[(a)]
		\item 
		
		Let $\mathbf A \in \mathcal{A}_{23}$ and $a,b,c \in A$. We have that
		$$
		\begin{array}{lcll}
		c' \to (a \to (b \to a)) & = & a \to (c' \to (b \to a)) & \mbox{by Lemma \ref{properties_of_I20_MC} (\ref{310516_01})} \\
		& = & a \to (b \to (c' \to a))  & \mbox{by Lemma \ref{properties_of_I20_MC} (\ref{310516_01})} \\
		& = & a \to (b \to (a' \to c)) & \mbox{by Lemma \ref{properties_of_I20_MC} (\ref{310516_02})} \\
		& = & a \to (a' \to (b \to c)). & \mbox{by Lemma \ref{properties_of_I20_MC} (\ref{310516_01})}. 
		\end{array}
		$$
		Hence $\mathbf A$ satisfies the identity
		\begin{equation} \label{310516_04}
		z' \to (x \to (y \to x)) \approx x \to (x' \to (y \to z)).
		\end{equation}
		
		Observe	
		$$
		\begin{array}{lcll}
		(a \to (a \to b')')' & = & ((a \to b') \to a')' & \mbox{by Lemma \ref{properties_of_I20_MC} (\ref{021017_02})} \\
		& = & ((a'' \to a) \to (b' \to a')')'' & \mbox{by (I)} \\
		& = & (a \to a) \to (b' \to a')'  & \mbox{by (\ref{eq_I20})} \\
		& = & (a \to a) \to ((b \to 0) \to a')' & \mbox{} \\
		& = & (a \to a) \to ((a'' \to b) \to (0 \to a')')'' & \mbox{by (I)} \\
		& = & (a \to a) \to ((a \to b) \to (0 \to a')') & \mbox{by (\ref{eq_I20})} \\
		& = & (a \to a) \to ((a \to b) \to a'') & \mbox{by Lemma \ref{properties_A23} (\ref{310516_03})} \\
		& = & (a \to a) \to ((a \to b) \to a) & \mbox{by (\ref{eq_I20})} \\
		& = & (a \to a) \to ((0 \to b) \to a) & \mbox{by Lemma \ref{general_properties2} (\ref{291014_10})} \\
		& = & (a \to a) \to (b \to a) & \mbox{by Lemma \ref{properties_A23} (\ref{310516_03})} \\
		& = & a \to ((a \to b) \to a) & \mbox{by ($A_{2 3}$)} \\ 
		& = &  a \to ((0 \to b) \to a) & \mbox{by Lemma \ref{general_properties2} (\ref{291014_10})} \\
		& = & a \to (b \to a) & \mbox{by Lemma \ref{properties_A23} (\ref{310516_03})}. 
		\end{array}
		$$
		Hence, using (\ref{310516_04}), we have that $\mathbf A$ satisfies
		\begin{equation*}
		z' \to (x \to (x \to y')')' \approx x \to (x' \to (y \to z)).
		\end{equation*}
		Therefore, by Lemma \ref{properties_A23} (\ref{310516_03}),
		\begin{equation} \label{310516_06}
		\mathbf A \models z' \to (x \to (x \to y')')' \approx x \to ((0 \to x') \to (y \to z)).
		\end{equation}
		From
		$$
		\begin{array}{lcll}
		a \to ((0 \to a') \to (b \to c)) & = & a \to ((a \to 0') \to (b \to c)) & \mbox{by Lemma \ref{general_properties} (\ref{cuasiConmutativeOfImplic})} \\
		& = & (a \to a) \to (b \to c) & \mbox{by Lemma \ref{properties_A23} (\ref{310516_05})} \\
		& = & a \to ((a \to b) \to c) & \mbox{by ($A_{2 3}$)}, \\  
		\end{array}
		$$
		and the equation (\ref{310516_06}), we obtain
		\begin{equation} \label{310516_07}
		\mathbf A \models z' \to (x \to (x \to y')')' \approx x \to ((x \to y) \to z).
		\end{equation}
		Finally, 
		$$
		\begin{array}{lcll}
		a \to ((a \to b) \to c)& = & c' \to (a \to (a \to b')')' & \mbox{by (\ref{310516_07})} \\
		& = & c' \to ((a \to a) \to b)' & \mbox{by Lemma \ref{properties_A23} (\ref{310516_08})} \\
		& = & ((a \to a) \to b) \to c & \mbox{by Lemma \ref{properties_of_I20_MC} (\ref{021017_01})}, 
		\end{array}
		$$
		proving that the identity ($A_{2 5}$) holds in $\mathbf A$.
		Therefore, we have that
		\begin{equation} \label{070616_09}
		\mathcal{A}_{23} \subseteq \mathcal{A}_{25}.
		\end{equation}	
		Let $\mathbf A \in \mathcal{A}_{25}$ and $a,b,c \in A$. Then
$$
\begin{array}{llll}
		((a \to b) \to b) \to c & =  (((a \to b) \to b) \to c)'' & \mbox{by (\ref{eq_I20})} \\
		& =  (c' \to ((a \to b) \to b)')'' & \mbox{ by Lemma \ref{properties_of_I20_MC} (\ref{021017_01})} \\
		& =  (c' \to (b' \to (a \to b)')')'' & \mbox{{by Lemma \ref{properties_of_I20_MC} (\ref{021017_01})}} \\
		& =  (c' \to (b \to (a \to b)))'' & \mbox{by Lemma \ref{properties_A25} (\ref{060616_02})} \\
		& =  (c'' \to (b \to (a \to b))')' &  \mbox{by Lemma \ref{properties_A25} (\ref{060616_02})} \\
		& =  (c \to (b \to (a \to b))')'  & \mbox{by (\ref{eq_I20})} \\
		& =  (c \to (b \to (b' \to a'))')' & \mbox{by Lemma \ref{properties_of_I20_MC} (\ref{021017_01})} \\
		& =  (c \to (b' \to (b' \to a')'))' & \mbox{by Lemma \ref{properties_A25} (\ref{060616_02})} \\
		& =  (c \to (b' \to (b \to a)))' &  \mbox{by Lemma \ref{properties_A25} (\ref{060616_02})} \\
		& =  (c \to (b \to (b' \to a)))' & \mbox{by Lemma \ref{properties_of_I20_MC} (\ref{310516_01})} \\
		& =  (b \to ((b \to c) \to a))' & \mbox{by Lemma \ref{properties_A25} (\ref{060616_03})} \\
		& =  (((b \to b) \to c) \to a)' & \mbox{by ($A_{2 5}$)} \\ 
		& =  (a' \to ((b \to b) \to c)')' & \mbox{by Lemma \ref{properties_of_I20_MC} (\ref{021017_01})} \\
		& =  a \to ((b \to b) \to c) & \mbox{by Lemma \ref{properties_A25} (\ref{060616_02})}. 
\end{array}
$$

		Therefore, we have that
		\begin{equation} \label{070616_10}
		\mathcal{A}_{25} \subseteq \mathcal{C}_{25}.
		\end{equation}

		Let $\mathbf A \in \mathcal{C}_{25}$ and $a,b,c \in A$. We have that
		$$
		\begin{array}{lcll}
		((a \to b) \to c) \to a & = & ((a \to b) \to c)'' \to a & \mbox{by (\ref{eq_I20})} \\
		& = & ((a \to b)' \to c')' \to a & \mbox{by Lemma \ref{properties_C25} (\ref{070616_04})} \\
		& = & ((a' \to b') \to c')' \to a & \mbox{by Lemma \ref{properties_C25} (\ref{070616_04})} \\
		& = & a' \to ((a' \to b') \to c') & \mbox{by Lemma \ref{properties_of_I20_MC} (\ref{310516_02})} \\
		& = & a' \to (c \to (a' \to b')') & \mbox{by Lemma \ref{properties_of_I20_MC} (\ref{021017_02})} \\
		& = & c \to (a' \to (a' \to b')') & \mbox{by Lemma \ref{properties_of_I20_MC} (\ref{310516_01})} \\
		& = & c \to (a' \to (a \to b)) & \mbox{by Lemma \ref{properties_C25} (\ref{070616_04})}. \\
		 & = & c \to (a' \to (a'' \to b)) & \mbox{by (\ref{eq_I20}) }\\
		& = & c \to (a' \to (b' \to a')) & \mbox{by Lemma \ref{properties_of_I20_MC} (\ref{310516_02})}\\ 
		& = & c \to (b' \to (a' \to a')) & \mbox{by Lemma \ref{properties_of_I20_MC} (\ref{310516_01})} \\
		& = & c \to ((a' \to a')' \to b) & \mbox{by Lemma \ref{properties_of_I20_MC} (\ref{310516_02})} \\
		& = & c \to ((a \to a) \to b) & \mbox{by Lemma \ref{properties_C25} (\ref{070616_04})} \\
		& = & ((c \to a) \to a) \to b & \mbox{by ($C_{2 5}$)} \\ 
		& = & b' \to ((c \to a) \to a)' & \mbox{by Lemma \ref{properties_of_I20_MC} (\ref{021017_01})} \\
		& = & b' \to ((c \to a)' \to a') & \mbox{by Lemma \ref{properties_C25} (\ref{070616_04})} \\
		& = & b' \to (a \to (c \to a)) & \mbox{by Lemma \ref{properties_of_I20_MC} (\ref{021017_01})} \\
		& = & b' \to (c \to (a \to a)) & \mbox{by Lemma \ref{properties_of_I20_MC} (\ref{310516_01})} \\
		& = & b' \to ((a \to a)' \to c') & \mbox{by Lemma \ref{properties_of_I20_MC} (\ref{021017_01})} \\
		& = & (a \to a)' \to (b' \to c') & \mbox{by Lemma \ref{properties_of_I20_MC} (\ref{310516_01})} \\
		& = & (b' \to c')' \to (a \to a) & \mbox{by Lemma \ref{properties_of_I20_MC} (\ref{310516_02})} \\
		& = & (b \to c) \to (a \to a) & \mbox{by Lemma \ref{properties_C25} (\ref{070616_04})} \\
		& = & a \to ((b \to c) \to a) & \mbox{by Lemma \ref{properties_of_I20_MC} (\ref{310516_01})}. 
		\end{array}
		$$
		Thus,
		\begin{equation} \label{070616_11}
		\mathcal{C}_{25} \subseteq \mathcal{D}_{25}.
		\end{equation}

		Let $\mathbf A \in \mathcal{D}_{25}$ and $a,b,c \in A$. Then
		
		$$
		\begin{array}{lcll}
		((a \to b) \to c) \to b & = & b' \to ((a \to b) \to c)' & \mbox{by Lemma \ref{properties_of_I20_MC} (\ref{021017_01})} \\
		& = & b' \to (c' \to (a \to b)')' & \mbox{by Lemma \ref{properties_of_I20_MC} (\ref{021017_01})} \\
		& = & b' \to (c' \to (a' \to b'))' & \mbox{by Lemma \ref{properties_D25} (\ref{070616_06})} \\
		& = & b' \to (c \to (a' \to b')')  & \mbox{by Lemma \ref{properties_D25} (\ref{070616_06})} \\
		& = & b' \to (c \to (a \to b)) & \mbox{by Lemma \ref{properties_D25} (\ref{070616_06})} \\
		& = &  b' \to (c \to (a'' \to b)) &  \mbox{by (\ref{eq_I20}) }\\
		& = & b' \to (c \to (b' \to a'))  & \mbox{by Lemma \ref{properties_of_I20_MC} (\ref{310516_02})} \\ 
		& = & b' \to (b' \to (c \to a'))  & \mbox{by Lemma \ref{properties_of_I20_MC} (\ref{310516_01})} \\
		& = & b' \to (c \to a') & \mbox{by Lemma \ref{general_properties2} (\ref{031114_04})} \\
		& = & b' \to (a \to c') & \mbox{by Lemma \ref{properties_of_I20_MC} (\ref{021017_02})} \\
		& = & a \to (b' \to c') & \mbox{by Lemma \ref{properties_of_I20_MC} (\ref{310516_01})} \\
		& = & a \to (b \to c)' & \mbox{by Lemma \ref{properties_D25} (\ref{070616_06})} \\
		& = & (b \to c) \to a' & \mbox{by Lemma \ref{properties_of_I20_MC} (\ref{021017_02})} \\
		& = & (b \to (b \to c)) \to a' & \mbox{by Lemma \ref{general_properties2} (\ref{031114_04})} \\
		& = & ((b \to c)' \to b') \to a' & \mbox{by Lemma \ref{properties_of_I20_MC} (\ref{021017_01})} \\
		& = & ((b \to c) \to b)' \to a' & \mbox{by Lemma \ref{properties_D25} (\ref{070616_06})} \\
		& = & a \to ((b \to c) \to b) & \mbox{by Lemma \ref{properties_of_I20_MC} (\ref{021017_01})}. 
		\end{array}
		$$
		Consequently,
		\begin{equation} \label{070616_12}
		\mathcal{D}_{25} \subseteq \mathcal{E}_{25}.
		\end{equation}

		Let $\mathbf A \in \mathcal{E}_{25}$ and $a,b,c \in A$. Then
		
		$$
		\begin{array}{lcll}
		(a \to a) \to (b \to c) & = & b \to ((a \to a) \to c) & \mbox{by Lemma \ref{properties_of_I20_MC} (\ref{310516_01})} \\
		& = & b \to (c' \to (a \to a)') & \mbox{by Lemma \ref{properties_of_I20_MC} (\ref{021017_01})} \\
		& = & b \to (c' \to (a' \to a')) & \mbox{by Lemma \ref{properties_E25} (\ref{070616_08})} \\
		& = & b \to (a' \to (c' \to a')) & \mbox{by Lemma \ref{properties_of_I20_MC} (\ref{310516_01})}. \\
		& = & b \to ((c' \to a')' \to a) & \mbox{by Lemma \ref{properties_of_I20_MC} (\ref{310516_02})} \\
		& = & b \to ((c \to a) \to a) & \mbox{by Lemma \ref{properties_E25} (\ref{070616_08})} \\
		& = & ((b \to c) \to a) \to a & \mbox{by ($E_{2 5}$)} \\ 
		& = & a' \to ((b \to c) \to a)' & \mbox{by Lemma \ref{properties_of_I20_MC} (\ref{021017_01})} \\
		& = & a' \to (a' \to (b \to c)')' & \mbox{by Lemma \ref{properties_of_I20_MC} (\ref{021017_01})} \\
		& = & a' \to (a' \to (b' \to c'))' & \mbox{by Lemma \ref{properties_E25} (\ref{070616_08})} \\
		& = & a' \to (a' \to (c \to b))' & \mbox{by Lemma \ref{properties_of_I20_MC} (\ref{021017_01})} \\
		& = & a' \to (a \to (c \to b)') & \mbox{by Lemma \ref{properties_E25} (\ref{070616_08})} \\
		& = & a \to (a' \to (c \to b)') & \mbox{by Lemma \ref{properties_of_I20_MC} (\ref{310516_01})} \\
		& = & a \to (a \to (c \to b))' & \mbox{by Lemma \ref{properties_E25} (\ref{070616_08})} \\
		& = & a \to (c \to (a \to b))' & \mbox{by Lemma \ref{properties_of_I20_MC} (\ref{310516_01})} \\
		& = & a \to (c \to (a' \to b')')' & \mbox{by Lemma \ref{properties_E25} (\ref{070616_08})} \\
		& = & a \to ((a' \to b') \to c')' & \mbox{by Lemma \ref{properties_of_I20_MC} (\ref{021017_02})} \\
		& = & a \to ((a \to b)' \to c')' & \mbox{by Lemma \ref{properties_E25} (\ref{070616_08})} \\
		& = & ((a \to b)' \to c') \to a' & \mbox{by Lemma \ref{properties_of_I20_MC} (\ref{021017_02})} \\
		& = & ((a \to b) \to c)' \to a' & \mbox{by Lemma \ref{properties_E25} (\ref{070616_08})} \\
		& = & a \to ((a \to b) \to c) & \mbox{by Lemma \ref{properties_of_I20_MC} (\ref{021017_01})}. 
		\end{array}
		$$
		Therefore,
		\begin{equation} \label{070616_13}
		\mathcal{E}_{25} \subseteq \mathcal{A}_{23}.
		\end{equation}
		
		From (\ref{070616_09}), (\ref{070616_10}), (\ref{070616_11}), (\ref{070616_12}) and (\ref{070616_13}), we conclude that
		$$\mathcal A_{23} = \mathcal A_{25} = \mathcal C_{25} = \mathcal D_{25} = \mathcal E_{25}.$$
		
		\item 
		Using Lemma \ref{properties_of_I20_MC} (\ref{310516_01}) we can easily verify that $\mathcal A_{12} = \mathcal B_{13}$ and $\mathcal F_{13} = \mathcal D_{12}$.
		
		Let $\mathbf A \in \mathcal{A}_{12}$ and $a,b,c \in A$. Then
		$$
		\begin{array}{lcll}
		a \to (b \to (c \to a)) & = & b \to (a \to (c \to a)) & \mbox{by Lemma \ref{properties_of_I20_MC} (\ref{310516_01})} \\
		& = & b \to (c \to (a \to a)) & \mbox{by Lemma \ref{properties_of_I20_MC} (\ref{310516_01})} \\
		& = & b \to (c \to (0 \to a)) & \mbox{by Lemma \ref{properties_A12} (\ref{140616_02})} \\
		& = & b \to (0 \to (c \to a))  & \mbox{by Lemma \ref{properties_of_I20_MC} (\ref{310516_01})} \\
		& = & 0 \to (b \to (c \to a)) & \mbox{by Lemma \ref{properties_of_I20_MC} (\ref{310516_01})} \\
		& = & 0 \to ((b \to c) \to a) & \mbox{by Lemma \ref{properties_A12} (\ref{230616_03})} \\
		& = & (b \to c) \to (0 \to a) & \mbox{by Lemma \ref{properties_of_I20_MC} (\ref{310516_01})} \\
		& = & (b \to c) \to (a \to a) & \mbox{by Lemma \ref{properties_A12} (\ref{140616_02})} \\
		& = & a \to ((b \to c) \to a) & \mbox{by Lemma \ref{properties_of_I20_MC} (\ref{310516_01})}. 
		\end{array}
		$$
		Therefore,
		\begin{equation} \label{140616_05}
		\mathcal{A}_{12} \subseteq \mathcal{D}_{12}.
		\end{equation}
		
		Let $\mathbf A \in \mathcal{D}_{12}$ and $a,b,c \in A$. Then		
		
		$$
		\begin{array}{lcll}
		& & ((a \to b) \to c) \to a & \mbox{} \\
		& = & ((a' \to (a \to b)) \to (c \to a)')' & \mbox{by (I)} \\
		& = & ((a \to (a' \to b)) \to (c \to a)')' & \mbox{by Lemma \ref{properties_of_I20_MC} (\ref{310516_01})} \\
		& = & ((a \to (b' \to a)) \to (c \to a)')' & \mbox{by Lemma \ref{properties_of_I20_MC} (\ref{310516_02})} \\
		& = & ((b' \to (a \to a)) \to (c \to a)')' & \mbox{by Lemma \ref{properties_of_I20_MC} (\ref{310516_01})} \\
		& = & ((b' \to (0 \to a)) \to (c \to a)')' & \mbox{by Lemma \ref{properties_A12} (\ref{140616_02})} \\
		& = & ((0 \to (b' \to a)) \to (c \to a)')' & \mbox{by Lemma \ref{properties_of_I20_MC} (\ref{310516_01})} \\
		& = & ((0 \to (a' \to b)) \to (c \to a)')' & \mbox{by Lemma \ref{properties_of_I20_MC} (\ref{310516_02})} \\
		& = & ((0 \to (0 \to (a' \to b))) \to (c \to a)')' & \mbox{by Lemma \ref{general_properties2} (\ref{031114_04})} \\
		& = & (((c \to a)' \to (0 \to (a' \to b))) \to (c \to a)')' & \mbox{by Lemma \ref{general_properties2} (\ref{291014_10})} \\
		& = & ((0 \to ((c \to a)' \to (a' \to b))) \to (c \to a)')' & \mbox{by Lemma \ref{properties_of_I20_MC} (\ref{310516_01})} 
		\end{array}
		$$
		and
		$$
		\begin{array}{lcll}
		& & ((0 \to ((c \to a)' \to (a' \to b))) \to (c \to a)')' \\
		& = & (((0 \to (c \to a)') \to (0 \to (a' \to b))) \to (c \to a)')' &  \\
		&  & \mbox{by Lemma \ref{general_properties2} (\ref{071114_04}) and (\ref{311014_06})} &  \\
		& = & (((0 \to (c \to a)) \to (0 \to (a' \to b))) \to (c \to a)')' & \mbox{by Lemma \ref{properties_A12} (\ref{140616_02})} \\
		& = & ((0 \to ((c \to a) \to (a' \to b))) \to (c \to a)')' &  \\
		&  & \mbox{by Lemma \ref{general_properties2} (\ref{071114_04}) and (\ref{311014_06})} &  \\
		& = & ((0 \to (((c \to a) \to a') \to b)) \to (c \to a)')' & \mbox{by Lemma \ref{properties_A12} (\ref{230616_03})} \\
		& = & (((0 \to ((c \to a) \to a')) \to (0 \to b)) \to (c \to a)')' &  \\
		&  & \mbox{by Lemma \ref{general_properties2} (\ref{071114_04}) and (\ref{311014_06})} &  \\
		& = & (((0 \to (a' \to (c \to a))) \to (0 \to b)) \to (c \to a)')' & \mbox{by Lemma \ref{properties_A12} (\ref{140616_04})} \\
		& = & (((0 \to (c \to a)) \to (0 \to b)) \to (c \to a)')' & \mbox{by Lemma \ref{general_properties2} (\ref{281114_01})} \\
		& = & (((0 \to (c \to a)') \to (0 \to b)) \to (c \to a)')' & \mbox{by Lemma \ref{properties_A12} (\ref{140616_02})} \\
		& = & ((0 \to ((c \to a)' \to b)) \to (c \to a)')' &  \\
		&  & \mbox{by Lemma \ref{general_properties2} (\ref{071114_04}) and (\ref{311014_06})} &  \\
		& = & (((c \to a)' \to (0 \to b)) \to (c \to a)')' & \mbox{by Lemma \ref{properties_of_I20_MC} (\ref{310516_01})} \\
		& = & ((0 \to (0 \to b)) \to (c \to a)')'  & \mbox{by Lemma \ref{general_properties2} (\ref{291014_10})} \\
		& = & ((0 \to (0 \to b)') \to (c \to a)')' & \mbox{by Lemma \ref{properties_A12} (\ref{140616_02})} \\
		& = & (((0 \to b) \to 0') \to (c \to a)')' & \mbox{by Lemma \ref{general_properties} (\ref{cuasiConmutativeOfImplic})} \\
		& = & (((0 \to b) \to (c \to a)'') \to (c \to a)')'  & \mbox{by Lemma \ref{general_properties2} (\ref{281014_05})} \\
		& = & (((0 \to b) \to (c \to a)) \to (c \to a)')' & \mbox{by (\ref{eq_I20})} \\
		& = & (((c \to a)' \to (0 \to b)') \to (c \to a)')' & \mbox{by Lemma \ref{properties_of_I20_MC} (\ref{021017_01})} \\
		& = & ((((c \to a) \to 0) \to (0 \to b)') \to (c \to a)')' & \mbox{} \\
		& = & ((((c \to a) \to 0) \to (0 \to b)') \to (0' \to (c \to a))')' & \mbox{by Lemma \ref{general_properties_equiv} (\ref{TXX})} \\
		& = & ((0 \to b)' \to 0') \to (c \to a) & \mbox{using (I)} \\
		& = & (0 \to (0 \to b)) \to (c \to a) & \mbox{by Lemma \ref{general_properties} (\ref{cuasiConmutativeOfImplic})} \\
		& = & (0 \to b) \to (c \to a) & \mbox{by Lemma \ref{general_properties2} (\ref{031114_04})} \\
		& = & c \to ((0 \to b) \to a) & \mbox{by Lemma \ref{properties_of_I20_MC} (\ref{310516_01})} \\
		& = & c \to ((a \to b) \to a) & \mbox{by Lemma \ref{general_properties2} (\ref{291014_10})} \\
		& = & (a \to b) \to (c \to a) & \mbox{by Lemma \ref{properties_of_I20_MC} (\ref{310516_01})}. 
		\end{array}
		$$

		Hence
		\begin{equation} \label{230616_04}
		\mathcal{D}_{12} \subseteq \mathcal{D}_{35}.
		\end{equation}
		\ \\ \ \\ \ \\ \ \\
		Next, let $\mathbf A \in \mathcal{D}_{35}$ and $a,b,c \in A$. Then
				
		\noindent $a \to ((a \to b) \to c)$
		
		$
		\begin{array}{lcll}
		& = & (a \to b) \to (a \to c) & \mbox{by Lemma \ref{properties_of_I20_MC} (\ref{310516_01})} \\
		& = & [((a \to c)' \to a) \to (b \to (a \to c))')' & \mbox{by (I)} \\
		& = & ((((a \to c) \to 0) \to a) \to (b \to (a \to c))')' & \mbox{} \\
		& = & (((a \to c) \to (0 \to a)) \to (b \to (a \to c))')' & \mbox{by ($D_{3 5}$)} \\ 
		& = & ((0 \to ((a \to c) \to a)) \to (b \to (a \to c))')' & \mbox{by Lemma \ref{properties_of_I20_MC} (\ref{310516_01})} \\
		& = & ((0 \to ((0 \to c) \to a)) \to (b \to (a \to c))')' & \mbox{by Lemma \ref{general_properties2} (\ref{291014_10})} \\
		& = & (((0 \to c) \to (0 \to a)) \to (b \to (a \to c))')' & \mbox{by Lemma \ref{properties_of_I20_MC} (\ref{310516_01})} \\
		& = & ((0 \to (c \to a)) \to (b \to (a \to c))')' & \\
		&  & \mbox{by Lemma \ref{general_properties2} (\ref{071114_04}) and (\ref{311014_06})} &  \\
		& = & ((0 \to (a \to c)) \to (b \to (a \to c))')' & \mbox{by Lemma \ref{properties_A12} (\ref{140616_04})} \\
		& = & (((a \to c)' \to 0') \to (b \to (a \to c))')' & \mbox{by Lemma \ref{general_properties} (\ref{cuasiConmutativeOfImplic})} \\
		& = & (0' \to b) \to (a \to c) & \mbox{using (I)} \\
		& = & b \to (a \to c) & \mbox{by Lemma \ref{general_properties_equiv} (\ref{TXX})} \\
		& = & a \to (b \to c) & \mbox{by Lemma \ref{properties_of_I20_MC} (\ref{310516_01})} \\
		& = & a \to (a \to (b \to c)) & \mbox{by Lemma \ref{general_properties2} (\ref{031114_04})}. 
		\end{array}
		$
		
		As a consequence, we have that
		\begin{equation} \label{230616_06}
		\mathcal{D}_{35} \subseteq \mathcal{A}_{12}.
		\end{equation}
		
		Therefore, using (\ref{140616_05}), (\ref{230616_04}) and (\ref{230616_06}) we get that $$\mathcal{A}_{12} = \mathcal{D}_{12} = \mathcal{D}_{35}.$$
		
		\item 
		Let $\mathbf A \in \mathcal{A}_{35}$ and $a,b,c \in A$. Then
		$$
		\begin{array}{lcll}
		((a \to b) \to c) \to c & = & ((c' \to (a \to b)) \to (c \to c)')' & \mbox{by (I)} \\
		& = & ((c \to c) \to (c' \to (a \to b))')' & \mbox{by Lemma \ref{properties_of_I20_MC} (\ref{021017_02})} \\
		& = & ((c \to c) \to (c' \to (a \to b)))'' & \mbox{by ($A_{3 5}$)} \\ 
		& = & (c \to c) \to (c' \to (a \to b)) & {\mbox{by (\ref{eq_I20})} } \\
		& = & (c \to c) \to (a \to (c' \to b)) & \mbox{by Lemma \ref{properties_of_I20_MC} (\ref{310516_01})} \\
		& = & a \to ((c \to c) \to (c' \to b)) & \mbox{by Lemma \ref{properties_of_I20_MC} (\ref{310516_01})} \\
		& = & a \to ((c \to c) \to (c' \to b)'') & \mbox{by (\ref{eq_I20})} \\
		& = & a \to ((c \to c) \to (c' \to b)')' & \mbox{by ($A_{3 5}$)} \\ 
		& = & a \to ((c' \to b) \to (c \to c)')' & \mbox{by Lemma \ref{properties_of_I20_MC} (\ref{021017_02})} \\
		& = & a \to ((b \to c) \to c) & \mbox{by (I)}. 
		\end{array}
		$$
		Then $$\mathcal{A}_{35} \subseteq \mathcal{F}_{25}.$$
		For the converse, suppose $\mathbf A \in \mathcal{F}_{25}$ and $a,b,c \in A$.  Then we have that
		$$
		\begin{array}{lcll}
		0 \to (a \to a) & = & 0 \to ((0' \to a) \to a) & \mbox{by Lemma \ref{general_properties_equiv} (\ref{TXX})} \\
		& = & ((0 \to 0') \to a) \to a & \mbox{by ($F_{2 5}$)} \\ 
		& = & ((0'' \to 0') \to a) \to a & \mbox{by (\ref{eq_I20})} \\
		& = & a \to a & \mbox{by Lemma \ref{general_properties_equiv} (\ref{LeftImplicationwithtilde})}. 
		\end{array}
		$$
		Hence, 
		$$ 
		\mathbf A \models 0 \to (x \to x) \approx x \to x.
		$$
		Using Lemma \ref{040716_03} we have that $\mathbf A \in \mathcal{A}_{35}$.
		Thus,
		$$\mathcal{F}_{25} \subseteq \mathcal{A}_{35}.$$

\item 

Let $\mathbf A \in \mathcal{S}$ and $a,b,c \in A$. Then
$$
\begin{array}{lcll}
a \to ((b \to a) \to c) & = & (b \to a) \to (a \to c) & \mbox{by Lemma \ref{properties_of_I20_MC} (\ref{310516_01})} \\
& = & (((a \to c)' \to b) \to (a \to (a \to c))')' & \mbox{by (I)} \\
& = & (((a \to c)' \to b) \to (a \to c)')' & \mbox{by Lemma \ref{general_properties2} (\ref{031114_04})} \\
& = & ((b' \to (a \to c)) \to (a \to c)')' & \mbox{by Lemma \ref{properties_of_I20_MC} (\ref{310516_02})} \\
& = & ((b' \to (a'' \to c)) \to (a \to c)')'  & \mbox{by (\ref{eq_I20})} \\
& = & ((b' \to (c' \to a')) \to (a \to c)')' & \mbox{by Lemma \ref{properties_of_I20_MC} (\ref{310516_02})} \\
& = & ((c' \to (b' \to a')) \to (a \to c)')' & \mbox{by Lemma \ref{properties_of_I20_MC} (\ref{310516_01})} \\
& = & ((c' \to (a \to b)) \to (a \to c)')' & \mbox{by Lemma \ref{properties_of_I20_MC} (\ref{021017_01})}\\
& = & ((a \to b) \to a) \to c & \mbox{by (I)}. 
\end{array}
$$

Thus, $\mathcal{S} \subseteq \mathcal{B}_{25}$, implying that $ \mathcal{B}_{25} = \mathcal{S} $.  This completes the proof.
	\end{enumerate}

\end{proof}

Since $\mathcal{SL}$ coincides with some of Bol-Moufang varieties and $\mathcal{S}$ coincides with $\mathcal{B}_{25}$, we regard both of them as varieties of Bol-Moufang type, relative to $\mathcal{S}$.

We are now ready to present our main result of this paper.

\begin{Theorem} \label{theo_4_dist_var}
	There are 5 nontrivial varieties of Bol-Moufang type, relative to $\mathcal{S}$, that are distinct from each other:
	 $\mathcal{SL}$, $\mathcal A_{12}$,   $\mathcal A_{23}$,   $\mathcal F_{25}$ and $\mathcal S$.  These varieties satisfy the following inclusions: 
	\begin{enumerate}[(a)]
		\item $\mathcal{SL} \subset \mathcal A_{23} \subset \mathcal F_{25}$, 
		\item $\mathcal{SL} \subset \mathcal A_{12}$,
		\item $\mathcal{BA} \subset \mathcal A_{12} \subset \mathcal F_{25}$, 
		\item $\mathcal F_{25} \subset \mathcal{I}_{2,0} \cap \mathcal{MC}$,
		\item $\mathcal{SL} = \mathcal A_{23} \cap \mathcal A_{12}$.  
	\end{enumerate}	
\end{Theorem}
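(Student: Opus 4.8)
The plan is to assemble the statement from the two structural theorems already in hand. By Theorem~\ref{theo_47_SL}, forty‑seven of the sixty Bol‑Moufang subvarieties of $\mathcal{S}$ coincide with $\mathcal{SL}$, and by Theorem~\ref{theorem_040716_05} the remaining thirteen collapse into exactly four classes, with representatives $\mathcal{A}_{23}$, $\mathcal{A}_{12}$, $\mathcal{F}_{25}$ and $\mathcal{S}$ (recall $\mathcal{B}_{25}=\mathcal{S}$, and $\mathcal{SL}$ is Bol‑Moufang relative to $\mathcal{S}$ since it equals those $47$ subvarieties). Hence the complete list of candidates for distinct Bol‑Moufang varieties relative to $\mathcal{S}$ is $\mathcal{SL},\mathcal{A}_{12},\mathcal{A}_{23},\mathcal{F}_{25},\mathcal{S}$, and it remains to verify the inclusions (a)--(e) and that these five varieties are pairwise distinct.

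For the inclusions: every $\mathcal{X}$ on the list contains $\mathcal{SL}$ by Lemma~\ref{including_relations_of_SL}; $\mathcal{BA}\subseteq\mathcal{A}_{12}$ is Lemma~\ref{including_relations_of_BA}; and $\mathcal{F}_{25}\subseteq\mathcal{I}_{2,0}\cap\mathcal{MC}$ holds because $\mathcal{F}_{25}$ is by definition a subvariety of $\mathcal{S}$. To obtain $\mathcal{A}_{23}\subseteq\mathcal{F}_{25}$ I would use Theorem~\ref{theorem_040716_05}(a): an algebra in $\mathcal{A}_{23}=\mathcal{A}_{25}$ satisfies both $(A_{23})$ and $(A_{25})$, and equating their right‑hand sides gives $(A_{35})$; since $\mathcal{A}_{35}=\mathcal{F}_{25}$ by Theorem~\ref{theorem_040716_05}(c), this is precisely $\mathcal{A}_{23}\subseteq\mathcal{F}_{25}$. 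For $\mathcal{A}_{12}\subseteq\mathcal{F}_{25}$: by Lemma~\ref{properties_A12} any $\mathbf A\in\mathcal{A}_{12}$ satisfies $x\to x\approx 0\to x$, so, using Lemma~\ref{general_properties2}(\ref{031114_04}) with the substitution $x:=0$, one gets $0\to(x\to x)\approx 0\to(0\to x)\approx 0\to x\approx x\to x$; Lemma~\ref{040716_03} then yields $(A_{35})$, i.e.\ $\mathbf A\in\mathcal{F}_{25}$. Finally, for (e) the inclusion $\mathcal{SL}\subseteq\mathcal{A}_{23}\cap\mathcal{A}_{12}$ is already in hand, and conversely if $\mathbf A\in\mathcal{A}_{23}\cap\mathcal{A}_{12}$ then $0\to x\approx x$ by Lemma~\ref{properties_A23}(\ref{310516_03}) and $x\to x\approx 0\to x$ by Lemma~\ref{properties_A12}, hence $x\to x\approx x$; Lemma~\ref{310516_09} gives $x'\approx x$, and Lemma~\ref{lemma_SL_I10_C} then places $\mathbf A$ in $\mathcal{MC}\cap\mathcal{I}_{1,0}=\mathcal{SL}$.

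It then remains to separate the five varieties. A direct check that $\mathbf{2_b}\in\mathcal{A}_{35}$ but $\mathbf{2_b}\notin\mathcal{A}_{23}$, together with $\mathbf{2_s}\notin\mathcal{BA}$, handles most cases at once: $\mathbf{2_b}\in\mathcal{BA}\subseteq\mathcal{A}_{12}$ with $\mathbf{2_b}\notin\mathcal{SL}$ gives $\mathcal{SL}\subsetneq\mathcal{A}_{12}$; $\mathbf{2_s}\in\mathcal{SL}\subseteq\mathcal{A}_{12}$ gives $\mathcal{BA}\subsetneq\mathcal{A}_{12}$; and $\mathbf{2_b}\in\mathcal{A}_{12}\subseteq\mathcal{F}_{25}$ with $\mathbf{2_b}\notin\mathcal{A}_{23}$ gives $\mathcal{A}_{23}\subsetneq\mathcal{F}_{25}$. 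For $\mathcal{F}_{25}\subsetneq\mathcal{S}$ I would use a De Morgan algebra that is neither Boolean nor a semilattice --- for instance the three‑element Kleene chain $\mathbf{K}$, which lies in $\mathcal{DM}\subseteq\mathcal{S}$ yet fails $0\to(x\to x)\approx x\to x$ and hence $(A_{35})$. The one genuinely new ingredient is an algebra in $\mathcal{A}_{23}\setminus\mathcal{SL}$; here I would take a finite example such as the ``twist'' of a join‑semilattice $(L,\lor,0)$ by an involutive automorphism $\sigma$ fixing $0$, with $x\to y:=\sigma(x)\lor y$ (taking $L$ the four‑element diamond and $\sigma$ the transposition of its two atoms yields a concrete four‑element algebra): a routine computation shows it lies in $\mathcal{S}$ and satisfies $(A_{23})$ while failing $x'\approx x$, and by part (e) it automatically lies outside $\mathcal{A}_{12}$, which simultaneously yields $\mathcal{A}_{12}\subsetneq\mathcal{F}_{25}$. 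Pairwise distinctness now follows: the chains $\mathcal{SL}\subsetneq\mathcal{A}_{23}\subsetneq\mathcal{F}_{25}\subsetneq\mathcal{S}$ and $\mathcal{SL}\subsetneq\mathcal{A}_{12}\subsetneq\mathcal{F}_{25}\subsetneq\mathcal{S}$ settle every pair except $\{\mathcal{A}_{12},\mathcal{A}_{23}\}$, and if these two were equal then (e) would force $\mathcal{A}_{12}=\mathcal{SL}$, contradicting $\mathcal{SL}\subsetneq\mathcal{A}_{12}$.

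The main obstacle is not any single deduction --- most of the argument is bookkeeping over Theorems~\ref{theo_47_SL} and~\ref{theorem_040716_05} plus the short computations above --- but producing and validating the separating algebras, and in particular checking that the twisted‑semilattice algebra genuinely satisfies $(A_{23})$ (equivalently $(A_{25})$) while failing idempotency, since that is the only point at which a new example, rather than an earlier result, carries the argument.
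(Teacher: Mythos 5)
Your proposal is correct and follows essentially the same route as the paper: the same assembly from Theorems \ref{theo_47_SL} and \ref{theorem_040716_05}, the same inclusion arguments via Lemmas \ref{including_relations_of_SL}, \ref{including_relations_of_BA}, \ref{properties_A12} and \ref{040716_03}, and in fact the same separating algebras --- your twisted four-element diamond and the three-element Kleene chain are exactly the $4\times 4$ and $3\times 3$ tables the paper exhibits, and $\mathbf{2_b}$, $\mathbf{2_s}$ play the same roles. Your two local shortcuts (obtaining $\mathcal A_{23}\subseteq\mathcal F_{25}$ by equating the right-hand sides of $(A_{23})$ and $(A_{25})$ to get $(A_{35})$, and proving part (e) directly from Lemma \ref{properties_A23}(\ref{310516_03}) and Lemma \ref{properties_A12}(\ref{140616_02}) followed by Lemmas \ref{310516_09} and \ref{lemma_SL_I10_C}) are legitimate streamlinings of the paper's longer equational computations.
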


\begin{proof}
	\begin{enumerate}[(a)]
		\item By Lemma \ref{including_relations_of_SL} we know that $\mathcal{SL} \subseteq \mathcal A_{23}.$ 
		
		The following example shows that the inclusion is proper:\\

		\begin{tabular}{r|rrrr}
			$\to$: & 0 & 1 & 2 & 3\\
			\hline
			0 & 0 & 1 & 2 & 3 \\
			1 & 2 & 3 & 2 & 3 \\
			2 & 1 & 1 & 3 & 3 \\
			3 & 3 & 3 & 3 & 3
		\end{tabular}\\ 		
		Next, we wish to show that $\mathcal A_{23} \subseteq \mathcal F_{25}$.	
		So, let $\mathbf A \in \mathcal A_{23}$ and $a \in A$. Then 
		$$
		\begin{array}{lcll}
		0 \to (a \to a)	& = & (0 \to a) \to (0 \to a) & \mbox{by Lemma \ref{general_properties2} (\ref{071114_04}) and (\ref{311014_06})} \\
		& = & 0 \to ((0 \to a) \to a) & \mbox{by Lemma \ref{properties_of_I20_MC} (\ref{310516_01})} \\
		& = & (0 \to 0) \to (a \to a) & \mbox{by ($A_{2 3}$)} \\ 
		& = & a \to a & \mbox{by Lemma \ref{general_properties_equiv} (\ref{TXX})}. 
		\end{array}
		$$
		By Lemma \ref{040716_03} and Theorem \ref{theorem_040716_05} (\ref{040716_06}), we have
		$\mathcal A_{23} \subseteq \mathcal F_{25}$.
		
		The following example shows that the inclusion is proper: \\
		
		\begin{tabular}{r|rr}
			$\to$: & 0 & 1\\
			\hline
			0 & 1 & 1 \\
			1 & 0 & 1
		\end{tabular}\\
		
		\item By Lemma \ref{including_relations_of_SL} we know that $\mathcal{SL} \subseteq \mathcal A_{12}.$ \\
		The following example shows that the inclusion is proper:\\

		\begin{tabular}{r|rr}
			$\to$: & 0 & 1\\
			\hline
			0 & 1 & 1 \\
			1 & 0 & 1
		\end{tabular}\\

		\item By  Lemma \ref{including_relations_of_BA} we know that $\mathcal{BA} \subseteq \mathcal A_{12}.$ \\
		The following example shows that the inclusion is proper: \\

		\begin{tabular}{r|rr}
			$\to$: & 0 & 1\\
			\hline
			0 & 0 & 1 \\
			1 & 1 & 1
		\end{tabular}\\
		
		\ 		
		
		To prove $\mathcal A_{12} \subseteq \mathcal F_{25}$, we let
		$\mathbf A \in \mathcal A_{12}$ and $a \in A$. Then 
		$$
		\begin{array}{lcll}
		0 \to (a \to a) & = & (0 \to a) \to (0 \to a) & \mbox{by Lemma \ref{general_properties2} (\ref{071114_04}) and (\ref{311014_06})} \\
		& = & 0 \to (0 \to a) & \mbox{by Lemma \ref{properties_A12} (\ref{140616_01})} \\
		& = & 0 \to a & \mbox{by Lemma \ref{general_properties2} (\ref{031114_04})} \\
		& = & a \to a & \mbox{by Lemma \ref{properties_A12} (\ref{140616_01})}. 
		\end{array}
		$$
		By Lemma \ref{040716_03} and Theorem \ref{theorem_040716_05} (\ref{040716_06}), $\mathcal A_{12} \subseteq \mathcal F_{25}$.
		
		The following example shows that the inclusion is proper:\\

		\begin{tabular}{r|rrrr}
			$\to$: & 0 & 1 & 2 & 3\\
			\hline
			0 & 0 & 1 & 2 & 3 \\
			1 & 2 & 3 & 2 & 3 \\
			2 & 1 & 1 & 3 & 3 \\
			3 & 3 & 3 & 3 & 3
		\end{tabular}

		\item 
		
		The following example shows that the inclusion is proper:\\

		\begin{tabular}{r|rrr}
			$\to$: & 0 & 1 & 2\\
			\hline
			0 & 2 & 2 & 2 \\
			1 & 1 & 1 & 2 \\
			2 & 0 & 1 & 2
		\end{tabular}
		
		\item Let $\mathbf A \in \mathcal A_{23} \cap \mathcal A_{12}$ and $a \in A$. Then 
		
		$$
		\begin{array}{lcll}
		a	& = & a'' & \mbox{by (\ref{eq_I20})} \\
		& = & a' \to  0 & \mbox{} \\
		& = & 0' \to (a' \to 0) & \mbox{by Lemma \ref{general_properties_equiv} (\ref{TXX})} \\
		& = & (0 \to 0) \to (a' \to 0) & \mbox{} \\
		& = & 0 \to ((0 \to a') \to 0) & \mbox{by ($A_{2 3}$)} \\ 
		& = & 0 \to (0 \to (a' \to 0)) & \mbox{by ($A_{1 2}$)} \\ 
		& = & 0 \to (a' \to 0) & \mbox{by Lemma \ref{general_properties2} (\ref{031114_04})} \\
		& = & 0 \to a'' & \mbox{} \\
		& = & 0 \to a. & \mbox{by (\ref{eq_I20})} 
		\end{array}
		$$
		Hence 
		\begin{equation} \label{030816_01}
		\mathbf A \models x \approx 0 \to x.
		\end{equation}
		Consequently,
		\begin{equation} \label{030816_02}
		\mathbf A \models 0 \approx 0'.
		\end{equation}
		Therefore,
		$$
		\begin{array}{lcll}
		a \to a	& = & (a \to a)'' & \mbox{by (\ref{eq_I20})} \\
		& = & ((a \to a) \to 0)' & \mbox{} \\
		& = & ((a \to a) \to 0')' & \mbox{by (\ref{030816_02})} \\
		& = & ((a \to a) \to (0 \to 0))' & \mbox{} \\
		& = & (a \to ((a \to 0) \to 0))' & \mbox{by ($A_{2 3}$)} \\ 
		& = & (a \to (a \to (0 \to 0)))' & \mbox{by ($A_{1 2}$)} \\ 
		& = & (a \to (0 \to 0))' & \mbox{by Lemma \ref{general_properties2} (\ref{031114_04})} \\ 
		& = & (a \to 0')' & \mbox{} \\
		& = & (0 \to a')' & \mbox{by Lemma \ref{general_properties} (\ref{cuasiConmutativeOfImplic})} \\
		& = & (a')' & \mbox{by (\ref{030816_01})} \\
		& = & a. & \mbox{by (\ref{eq_I20})} 
		\end{array}
		$$
		Thus, $\mathbf A \models x \to x \approx x$.  Hence, from Lemma \ref{310516_09} and Lemma \ref{lemma_SL_I10_C}, $\mathbf A  \in \mathcal{SL}$. In view of Lemma \ref{including_relations_of_SL} the proof is complete.
	\end{enumerate}	
\end{proof}

From the preceding theorem, it is easy to see that the poset (in fact, $\land$-semilattice) of varieties of Bol-Moufang type, together with the variety $\mathcal{BA}$ of Boolean algebras, under inclusion is as shown below:\\

\begin{minipage}{0.5 \textwidth}
	\setlength{\unitlength}{1mm}
	\begin{picture}(30,45)(-15,0)
	\put(0,0){\circle{2}}
	\put(-10,10){\circle{2}}
	\put(10,10){\circle{2}}
	\put(-10,20){\circle{2}}
	\put(10,20){\circle{2}}
	\put(0,30){\circle{2}}
	\put(0,40){\circle{2}}
	\put(5.5,0){\makebox(0,0){$\mathcal T$}}
	\put(-15.5,10){\makebox(0,0){$\mathcal{SL}$}}
	\put(-15.5,20){\makebox(0,0){$\mathcal{A}_{23}$}}
	\put(15.5,10){\makebox(0,0){$\mathcal{BA}$}}
	\put(15.5,20){\makebox(0,0){$\mathcal{A}_{12}$}}
	\put(5.5,30){\makebox(0,0){$\mathcal{F}_{25}$}}
	\put(4.5,40){\makebox(0,0){$\mathcal{S}$}}                  
	\put(1,1){\line(1,1){8.2}}
	\put(-9,11){\line(2,1){18.2}}
	\put(-9,21){\line(1,1){8.2}}
	\put(-1,1){\line(-1,1){8.2}}
	\put(9,21){\line(-1,1){8.2}}
	\put(-10,11){\line(0,1){8}}
	\put(10,11){\line(0,1){8}}
	\put(0,31){\line(0,1){8}}
	\end{picture}
\end{minipage}\\

\medskip
\ \par

It is worthwhile to point out that our investigations into Bol-Moufang identities relative to $\mathcal{S}$, have revealed three new varieties, namely $\mathcal{A}_{12}$, $\mathcal{A}_{23}$ and $\mathcal{F}_{25}$, hitherto unknown.

In \cite{cornejo2016weak associative}, we investigate all the remaining weak associative laws of size $\leq 4$ which will complement the results of this paper. 

We conclude with the remark that it would be of interest to investigate the weak associative identities, and in particular, the identities of Bol-Moufang type, relative to $\mathcal{I}$ and other (important) subvarieties of $\mathcal{I}$
(see the Problem mentioned in the introduction). \\

\ \\ \ \\
\noindent{\bf Acknowledgment:} 
The first author wants to thank the
institutional support of CONICET  (Consejo Nacional de Investigaciones Cient\'ificas y T\'ecnicas).   
The authors wish to express their indebtedness to the anonymous referees for their careful reading of the paper.
\ \\

\noindent {\bf Compliance with Ethical Standards:}\\ 

\noindent {\bf Conflict of Interest:} The first author declares that he has no conflict of interest. The second author  declares that he has no conflict of interest. \\

\noindent {\bf Ethical approval:}
This article does not contain any studies with human participants or animals performed by any of the authors. \\

	\noindent {\bf Funding:}  
	The work of Juan M. Cornejo was supported by CONICET (Consejo Nacional de Investigaciones Cientificas y Tecnicas) and Universidad Nacional del Sur. 
	Hanamantagouda P. Sankappanavar did not receive any specific grant from funding agencies in the public, commercial, or not-for-profit sectors.

\vskip .5cm

\noindent {\sc Juan M. Cornejo}\\
Departamento de Matem\'atica\\
Universidad Nacional del Sur\\
Alem 1253, Bah\'ia Blanca, Argentina\\
INMABB - CONICET

\noindent jmcornejo@uns.edu.ar

\vskip .5cm

\noindent {\sc Hanamantagouda P. Sankappanavar}\\
Department of Mathematics\\
State University of New York\\
New Paltz, New York 12561\\
U.S.A.

\noindent sankapph@newpaltz.edu


\begin{thebibliography}{99}

	\bibitem[BD74]{balbesDistributive1974}
	Balbes R, Dwinger,  PH (1974)  
	Distributive lattices.
	Univ. of Missouri Press,  Columbia.
	
	
	\bibitem[Be34]{bernstein1934postulates}
	
	Bernstein BA (1934) A set of four postulates for Boolean algebras in terms of the implicative  
	operation.
	Trans. Amer. Math. Soc. 36,  876-884.
	
	\bibitem[BS81]{burrisCourse1981} 
	Burris S, Sankappanavar HP (1981) A course in universal Algebra.
	Springer-Verlag, New York.
	The free,  corrected version (2012) is available online as a PDF file 
	at  {\sf math.uwaterloo.ca/$\sim$snburris}.  
	
		
	\bibitem[CS16]{cornejo2015implication}
	Cornejo JM,  Sankappanavar HP (2017) On implicator groupoids.  Algebra univers. 
	77(2), 125--146.  \\
	doi:10.1007/s00012-017-0429-0.  arXiv:1509.03774.
	
	\bibitem[CS16a]{cornejo2016order}
	Cornejo JM,  Sankappanavar HP (2016) Order in implication zroupoids. Stud Logica. 104, 
	417--453.  doi: 10.1007/s11225-015-9646-8.
	
	
	\bibitem[CS16b]{cornejo2016semisimple}
	Cornejo JM,  Sankappanavar HP (2016) Semisimple varieties of implication zroupoids. 
	Soft Comput. 20, 3139-- 3151.  doi: 10.1007/s00500-015-1950-8.
	
	\bibitem[CS16c]{cornejo2016derived}
	Cornejo JM,  Sankappanavar HP (2016) On derived algebras and subvarieties of implication 
	zroupoids. Soft Comput.  doi: 10.1007/s00500-016-2421-6, pages 1-20.
		
	\bibitem[CS16d]{cornejo2016weak associative}
	Cornejo JM,  Sankappanavar HP (2017) Symmetric implication zroupoids and   
	the weak associative laws 
	(Submitted).
		
	
	\bibitem[Fe69]{Fe69} Fenyves F (1969) Extra loops. II.   Publ. Math. Debrecen 16,   
	187-192. 
	
	\bibitem[Ku96]{Ku96}Kunen K (1996) Quasigroups,  loops,  and associative laws. J. Algebra 185, 194-204.   doi: 10.1006/jabr.1996.0321. 
	
	
	\bibitem[Mc] {Mc} McCune W  (2005-2010) Prover9 and Mace4. URL: http://www.cs.unm.edu/mccune/prover9/
	
	
	\bibitem[PV05a] {PV05a} Phillips JD,  Vojtechovsky P (2005) The varieties of loops of Bol-Moufang type,  Algebra univers. 54,  259--271.  doi: 10.1007/s00012-005-1941-1. 
	
	\bibitem[PV05b] {PV05b} Phillips JD,  Vojtechovsky P (2005) The varieties of quasigroups of Bol-Moufang type: an equational reasoning approach. J. Algebra. 293,  17--33 (2005).
	
	
		
	\bibitem[R74]{Ra74} Rasiowa H (1974)  An algebraic approach to non-classical logics. North--Holland,  Amsterdam.
	
	
	\bibitem[San12]{sankappanavarMorgan2012} Sankappanavar HP (2012) De Morgan algebras:   
	New perspectives and applications. Sci. Math. Jpn.  75(1): 21--50.

\end{thebibliography}
\end{document}